\newtheorem{theorem}{Theorem}[section]
\newtheorem{lemma}{Lemma}[section]
\newtheorem{prop}{Proposition}[section]
\newtheorem{remark}{Remark}[section]
\numberwithin{equation}{section}
\newcommand{\R}{\mathbb{R}}
\newcommand{\Z}{\mathbb{Z}}
\newcommand{\E}{\mathbb{E}}
\begin{document}

\title[DISCRETE MOVING POLYMER]{EFFECTIVE RADIUS OF A DISCRETE MOVING POLYMER}
\author{Jiaming Chen}
\address{Dept. of Mathematics
\\Imperial College
\\London, UK}
\email{j.chen1@imperial.ac.uk}
\keywords{Discrete moving polymer, mean-squared radius, discrete stochastic heat equation.}
\subjclass[2020]{Primary, 60H15; Secondary, 82D60.}
\begin{abstract}
We present a discrete model for a weakly self-avoiding polymer of intrinsic length $J$ governed by a discrete heat equation perturbed by Gaussian noise. By introducing a renormalized penalizing factor, we establish sharp bounds on the polymer's root mean squared radius of gyration $R(T,J)$ scales linearly with $J$, up to logarithmic corrections and explicit powers of the self-repellence parameter $\beta$. This is consistent with the weak-interaction results for equilibrium polymers, highlighting both the distinct nature of the discrete model and its analytical tractability for studying polymer
behavior in lattice-like environments.
\end{abstract}

\maketitle

\section{Introduction}
Over the past few decades, random polymers have attracted significant attention across disciplines including chemistry, physics, and mathematics, due to their ubiquity in the physical world. These complex structures play a central role in areas such as molecular theory, statistical mechanics, and probability theory. For a comprehensive overview from the physical sciences' perspective, we refer to \cite{doi1988theory}, while rigorous mathematical treatments can be found in \cite{hollander2009random} and \cite{bauerschmidt2012lectures}. A detailed survey of key results and open problems in the one-dimensional setting is provided by \cite{van2001survey}.

A polymer is a molecule composed of many ``monomers'' (groups of atoms) linked together through chemical bonds. From a mathematical perspective, modeling polymers is challenging due to the complex interactions that influence their spatial configurations. The most basic model treats the polymer as a simple random walk, where the time parameter represents the progression along the chain, and each new segment is added in a random direction with identical step distributions. However, this model fails to capture an essential physical constraint: two monomers cannot occupy the same position in space, and in fact, tend to repel one another when in proximity. This phenomenon, known as the \textit{excluded volume effect}, motivates a fundamental modification by introducing a penalty for self-intersections. Extensive research has been conducted on this topic; see \cite{madras2013self} for a detailed account. 

The \textit{Rouse model} \cite{rouse1953theory} serves as a foundational framework for studying the dynamics of polymer solutions without self-avoidance, modeling the polymer as a chain of beads connected by harmonic springs. Let $(R_1,R_2,...,R_N)$ be the position (vectors) of the beads, then for internal beads $(n=2,3,...,N-1)$,
\begin{equation*}
\zeta\frac{dR_n}{dt} = \kappa(R_{n+1}-2R_n+R_{n-1})+f_n.
\end{equation*}
For the end beads $(n=1$ and $n=N)$,
\begin{equation*}
\zeta\frac{dR_1}{dt} = \kappa(R_{2}-R_{1})+f_1,~\zeta\frac{dR_N}{dt} = \kappa(R_{N-1}-R_{N})+f_N,
\end{equation*}
where $\zeta,\kappa$ are constants and $f_n$ is a Gaussian random force indexed on $n$.

Doi and Edwards \cite{doi1988theory} obtain the following stochastic partial differential equation (SPDE) as a continuum limit of the Rouse model. This equation, also known as the \textit{Edwards-Wilkinson model}, takes the form:
\begin{equation*}
\begin{split}
\partial_tu&=\partial^2_xu+\dot{W}(t,x),\\
u(0,x) &= u_0(x),
\end{split}
\end{equation*}
where $(\dot{W}(t,x))_{t\ge0,x\in[0,J]}$ is a two-parameter white noise and $u_0$ is a continuous and bounded initial function. The authors argue that, over long time scales, key universal properties of the polymer, such as scaling behavior, should be consistent between the discrete and continuous models. 

To the best of the author's knowledge, Mueller and Neuman \cite{mueller2022scaling} were the first to rigorously analyze the scaling behavior of a dynamically evolving and weakly self-avoiding polymer that incorporates time as a variable. In reality, even when the underlying model is continuous, we typically observe it only at discrete points in space or time. While discretizing continuous models is standard in both theoretical and practical settings, such discretizations require careful treatment. Verifying that the discrete model preserves the expected scaling behavior thus requires new techniques and careful analysis. The goal of this paper is to study a discrete-time and discrete-space version of the polymer model in one dimension under suitable assumptions.

We now consider the following discretized model:
\begin{equation}
\label{discModel}
\begin{split}
 u(t+1,n)-u(t,n) &= \kappa\left(u(t,n+1)-2u(t,n)+u(t,n-1)\right)+\xi(t,n),\\
 u(0,n) &= u_0(n),
\end{split}
\end{equation}
which describes the temporal evolution of the function $u(t,n)$ under a discrete Laplacian operator perturbed by a standard Gaussian random variable $\xi(t,n)$. To ensure convergence and stability of the numerical scheme, we set the diffusion coefficient to $\kappa = \frac{1}{2}$. Without loss of generality, the analysis is restricted to grid points within the domain $[0,T]\times[0,J-1]$, corresponding to the index sets $\mathcal{T}:=\{0,1,...,T\}$ and $\mathcal{J}:=\{0,1,...,J-1\}$ for $T,J\in\Z^+$. Here $u_0$ is the initial profile bounded uniformly in $J$. In the absence of external interactions, we assume the noise variables are independent and characterized by the following moments:
\begin{equation}
\label{noise}
\begin{split}
\E[\xi(t,n)] &= 0,\\
\E[\xi(t,n)\xi(s,m)]&=\delta_{ts}\delta_{nm},
\end{split}
\end{equation}
where $\delta$ denotes the Kronecker delta. Since the endpoints of the polymer are not fixed, we impose discrete Neumann boundary conditions at the hypothetical beads $u(\cdot,-1)$ and $u(\cdot,J)$:
\begin{equation}
\label{NeuCond}
u(\cdot,-1) = u(\cdot,0),~u(\cdot,J) = u(\cdot, J-1).
\end{equation}

The heat kernel corresponding to equation \eqref{discModel} under the Neumann boundary in \eqref{NeuCond} is given by:
\begin{equation*}
    G_t(n,k)=\sum_{m=0}^{J-1}a_m\left(\cos\left(\frac{m\pi}{J}\right)\right)^{t}\cos\left(\frac{m\pi(n+0.5)}{J}\right)\cos\left(\frac{m\pi(k+0.5)}{J}\right),
\end{equation*}
where $a_m = \begin{cases}
\frac{1}{J}& m=0,\\
\frac{2}{J}&m=1,...,J-1
\end{cases}$.
Let us denote 
\begin{equation}
\label{kernel_notation}
\rho_m =\cos\left(\frac{m\pi}{J}\right),~~\phi_m(n)=\begin{cases}
\sqrt{\frac{1}{J}}& m=0,\\
\sqrt{\frac{2}{J}}\cos\left(\frac{m\pi(n+0.5)}{J}\right)&m=1,...,J-1,
\end{cases}
\end{equation}
and the solution $u(t,n)$ is given by
\begin{equation}
\label{sol}
    u(t,n)=\sum_{m=0}^{J-1}G_t(n,m)u_0(m) + \sum_{s=0}^{t-1}\sum_{m=0}^{J-1}G_{t-s}(n,m)\xi(s,m).
\end{equation}

We now incorporate self-avoidance into our discrete polymer model. A classical approach in statistical mechanics is to modify the reference probability measure by assigning a penalty proportional to the number of self-intersections. Specifically, we weight each path by the exponential of the negative product of a parameter and the pairwise self-intersection count. These weights are often referred to as \textit{Boltzmann weights} in statistical mechanics.

In our setting, since the solution $u(t,n)\in\R$ while the index $n$ belongs to a discrete set, we approximate the self-intersection by introducing a finite spatial resolution, following a similar approach to that of \cite{lin2025radius}. For each time $t$, we define the approximate self-intersection count as
\begin{equation}
\label{localTime}
\begin{split}
    N_d(t):=\sum_{0\leq i\neq j\leq J-1}1_{\vert u(t,i)-u(t,j)\vert\leq d}.
    \end{split}
\end{equation}
This expression quantifies the number of pairs of monomers (indexed by $i$ and $j$) that are within $d$ distance of each other at time $t$, capturing the local crowding of the polymer. One might consider defining the self-interaction over the entire time horizon via the total sum
\begin{equation*}
    \sum\limits_{t=1}^{T}\sum_{0\leq i\neq j\leq J-1}1_{\vert u(t,i)-u(t,j)\vert\leq d}.
\end{equation*}
However, following the rationale in \cite{mueller2022scaling}, there is no physical reason why different parts of the polymer cannot occupy the same spatial location at different times. Therefore, it is more appropriate to work with the time-slice-based definition in \eqref{localTime}.

Let $P_{T,J}$ denote the uniform probability measure on the space of polymer configurations $(u(t,n))_{t\in\mathcal{T},n\in\mathcal{J}}$. \cite{lin2025radius} investigated the case when $d\equiv 1$. As $d$ varies, it is natural to renormalize the self-intersection count by a factor of $d^\alpha$, for some $\alpha\geq 0$. We define the penalizing factor
\begin{equation}
\label{penalF}
    \mathcal{E}_{T,J,\beta,d,\alpha}=\exp\left(-\frac{\beta}{d^\alpha}\sum_{t=1}^{T}N_d(t)\right),
\end{equation}
and define the corresponding self-repelling polymer measure $Q_{T,J,\beta,d,\alpha}$ via
\begin{equation}
\label{Qmeas}
Q_{T,J,\beta,d,\alpha}(A) =\frac{ \mathbb{E}^{P_{T,J}}[1_{A}\mathcal{E}_{T,J,\beta,d,\alpha}]}{Z_{T,J,\beta,d,\alpha}}, \text{ for all measurable set $A$},
\end{equation}
where the partition function $Z_{T,J,\beta,d,\alpha}$ is given by
\begin{equation}
\label{Z}
Z_{T,J,\beta,d,\alpha} = \mathbb{E}^{P_{T,J}}[\mathcal{E}_{T,J,\beta,d,\alpha}].
\end{equation}

The $\beta>0$ is a parameter governing the strength of self-repellence and typically represents the inverse temperature of the system. When $d=1$, the probability measure $Q_{T,J,\beta,d,\alpha}$ is known as the \textit{$J-$polymer measure with strength of self repellence $\beta$}, and equation \eqref{penalF} defines what is known as the \textit{Domb-Joyce model} for soft polymers (see \cite{madras2013self} Section 10.1). It models the polymer in equilibrium with itself and its environment. Each self-intersection at a fixed time incurs an energy cost of $\beta$, which translates into a penalty of $e^{-\beta}$ for that configuration. This induces an effective repulsion that causes the polymer to spread out over space. In this article, we study the case $d\geq 1$ and analyze how the model changes as $\alpha\to\infty$.

For notational convenience, we suppress subscripts except for the time horizon $T$, and write:
\begin{equation*}
P_{T} = P_{T,J},~Q_{T}=Q_{T,J,\beta,d,\alpha},~\mathcal{E}_{T} = \mathcal{E}_{T,J,\beta,d,\alpha},~Z_{T}=Z_{T,J,\beta,d,\alpha}.
\end{equation*}

Throughout this paper, we use $C$, $C'$ and $C''$ to denote a generic positive constant, whose value may change from line to line. Dependence of constants on specific parameters is explicitly indicated by listing the parameters in parentheses.

In Section \ref{main_result}, we state the main theorems concerning the spatial extent of the polymer under the self-repelling measure $Q_T$. Section \ref{Lower Bound on the Partition Function} derives a lower bound for the partition function $Z_T$ via a Gaussian exponential tilting. Sections \ref{Small distance tail estimate} and \ref{Large distance tail estimate} establish tail bounds for the effective radius, addressing the lower and upper deviations, respectively, thereby completing the proof of Theorem \ref{preTh}. Finally, Theorem \ref{mainTh} is proved in Section \ref{limitalpha} via Lemma \ref{interchange}.

\section{Main Result}
\label{main_result}
A fundamental quantity in characterizing the spatial configuration of a polymer is its macroscopic spatial extent. In this work, we focus on the \textit{radius of gyration}, a widely accepted measure defined as the mean squared distance of the polymer's segments from its center of mass. This quantity captures the average dispersion of the polymer in space and serves as a robust indicator of its effective size.

We define the center of mass of the polymer configuration at time $t$ by
\begin{equation}
\label{ubar}
\bar{u}(t) = \frac{1}{J}\sum_{n=0}^{J-1}u(t,n),
\end{equation}
and introduce the root mean squared radius of gyration over time horizon $T$ as
\begin{equation}
\label{RTJ}
R(T,J) = \left[\frac{1}{TJ}\sum_{t=1}^T\sum_{n=0}^{J-1}(u(t,n)-\bar{u}(t))^2\right]^{1/2}.
\end{equation}
This definition generalizes a classical concept discussed in Equation $(2.6)$ on page 9 of \cite{doi1988theory}, where the polymer's size is approximated by the end-to-end distance $|R_N-R_1|$ for a polymer $(R_1,R_2,...,R_N)$. However, as emphasized by the authors on page 22 of the same reference, this end-to-end distance is not always an appropriate measure, especially for branched polymers where it may be ill-defined. Even though our focus is on linear polymers, we adopt the radius of gyration $R(T,J)$ as it more comprehensively captures the spatial structure through fluctuations around the center of mass.

In the context of the one-dimensional Domb-Joyce model, Bolthausen \cite{bolthausen1990self}, using large deviation techniques, showed that the end-to-end distance grows linearly with polymer length $J$ almost surely only for small values of the self-repellence parameter $\beta$. The survey paper \cite{van2001survey} contains a much more complete picture, listing several results that establish the $J$-scaling with greater generality and precision. Particularly, the results are valid for all $\beta > 0$, not just small $\beta$. Both articles concern equilibrium polymers, i.e., models without a time component. 

More recently, Mueller and Neuman \cite{mueller2022scaling} showed in the continuous setting that the effective radius of a moving weakly self-avoiding polymer scales as $J^{5/3}$. The results of the present paper concern a time-dependent discrete model with a modified penalizing factor. Although the scaling behavior is consistent with known equilibrium results, our analysis provides the proof in a dynamical discrete framework.

\begin{theorem}
    \label{preTh}
    There exist positive constants $K_1$, $K_2$, $\tilde{J}$, $\tilde{d}$ and $\beta_C$ such that for $\frac{\tilde{d}J}{\log J}\geq d\geq 1$, $\beta \geq\beta_Cd^{4+\alpha}J^{-2}(\log J)^{2}$ and $J>\tilde{J}$, we have
    \begin{equation*}
        \lim_{T\to\infty}Q_T\left[K_1\beta_C^{\frac{1}{3}}d J\leq R(T,J)\leq K_2\beta^{\frac{1}{3}}d^{\frac{2-\alpha}{3}}J^{\frac{5}{3}}(\log J)^{-\frac{1}{6}}\right]=1.
    \end{equation*}
\end{theorem}
A particularly important choice is $d=\left(\frac{J}{\log J}\right)^{\frac{2}{4+\alpha}}$, for which the lower bound on $\beta$ becomes independent of both $d$ and $J$. Passing then to the limit $\alpha\to\infty$ leads to the following result.

\begin{theorem}
\label{mainTh}
    There exist positive constants $K_1$, $K_2$, $\tilde{J}$ and $\beta_C$ such that for $\beta\geq\beta_C$ and $J>\tilde{J}$,
    \begin{equation*}
        \lim_{T\to\infty}\widetilde{Q}_T\left[K_1\beta_C^{\frac{1}{3}}J\leq R(T,J)\leq K_2\beta^{\frac{1}{3}}J(\log J)^{\frac{1}{2}}\right]=1.
    \end{equation*}
    The corresponding penalizing factor is
    \begin{equation}
    \label{limit_penal}
        \widetilde{\mathcal{E}}_T=\exp\left(-\frac{\beta(\log J)^2}{J^2}\sum_{t=1}^TN_1(t)\right).
    \end{equation}
\end{theorem}

\begin{remark}
Most known results for weakly self-avoiding polymers do not produce an end-to-end distance with explicit dependence on the self-repellence parameter $\beta$. Our result is consistent with this general phenomenon. Indeed, at the level of the lower bound, the root mean squared radius $R(T,J)$ is asymptotically independent of $\beta$, whereas the upper bound retains explicit dependence on $\beta$, for all $\beta\in[\beta_C,\infty)$. 
\end{remark}

\begin{remark}
If no renormalization is introduced and $d$ is fixed in \eqref{penalF}, the model is comparable to those studied in \cite{lin2025radius} and \cite{mueller2022scaling}. In this case, the self-intersection count $N_d(t)$ includes the $J$ diagonal terms, which contribute a finite but non-negligible penalty independent of the spatial configuration of the polymer. In this regime, Theorem \ref{preTh} is suboptimal in the sense that a gap remains between the lower and upper bounds. Notably, the upper bound matches the scaling found in \cite{mueller2022scaling} for the continuous model.

By contrast, once $d$ depends on $J$ and renormalization is introduced with $\alpha \to \infty$ in \eqref{penalF}, the model enters a fundamentally different regime, with the penalizing factor given in \eqref{limit_penal}. The effective coupling $\tilde{\beta}=\beta/d^\alpha$ then converges, as $\alpha\to\infty$, to $\beta(\log J)^2/J^2$, which vanishes as $J \to \infty$ for fixed $\beta\geq\beta_C$. In this regime, the self-repellence yields the sharp $J$-scaling, with matching upper and lower bounds up to logarithmic factors. This is consistent with the weak-interaction results for equilibrium polymers established in \cite{hofstad2003weak}.
\end{remark}

\begin{remark}
    The proof of Theorem \ref{preTh} makes essential use of the Gaussian nature of the noise $\xi(t,n)$ in two places. First, the lower bound on the partition function $Z_T$ in Section \ref{Lower Bound on the Partition Function} relies on the fact that spatial increments $u(0,i)-u(0,j)$ are Gaussian under the tilted measure, which allows the self-intersection probability to be easily bounded. Second, the large deviation principle invoked in Section \ref{Large distance tail estimate} relies on the fact that the driving noise of each component $X_t^{(m)}$ is Gaussian, which enables the moment-generating function of the quadratic functional to be computed explicitly via iterated conditional expectations. For non-Gaussian noise, neither of these computations would be tractable in the same way, and extending the result to more general noise distributions remains an interesting open problem.
\end{remark}

\section{Lower Bound on the Partition Function}
\label{Lower Bound on the Partition Function}
In this section, we analyze the behavior of the partition function $Z_T$ and derive a rigorous lower bound. To this end, we construct a penalized path by introducing a drift term, following the methodology outlined in Section 3 of \cite{bolthausen1990self}, which we will define precisely below. This approach is standard: under the tilted measure induced by the drift, Jensen's inequality is applied to estimate the lower tail probability of the path possessing the prescribed drift. 

The Gaussian random variables $\xi(t,j)$ in \eqref{noise} are assumed to be independent, indexed by discrete time $t$ and spatial coordinate $j$. Let $\Lambda(a)$ denote the log-moment generating function of $\xi$, defined by
\begin{equation}
\label{logmom}
    \Lambda(a) := \log\left(\int_\R \exp(ay)dP(y)\right)=\frac{a^2}{2},
\end{equation} 
where the expectation is taken with respect to the standard Gaussian distribution $P(y):=P_T(\xi\leq y)$. Adding a constant drift merely induces a horizontal shift in the solution of \eqref{sol}. Motivated by \cite{mueller2022scaling}, we therefore introduce the tilted probability measure $\widehat{P}_T^{(a)}$, corresponding to a drift $a>4\pi^2 d J^{-1/2}$ in the direction of $\phi_1(j)$ in \eqref{kernel_notation}, through its Radon-Nikodym derivative with respect to $P_T$:
\begin{equation}
\label{RN}
    \frac{d\widehat{P}_T^{(a)}}{dP_T}=\prod_{t=1}^T\prod_{j=0}^{J-1}\exp(a\phi_1(j)\xi(t,j)-\Lambda(a\phi_1(j)))\text{ where $a>4\pi^2d J^{-1/2}$}.
\end{equation}
The next lemma gives a lower bound for $\frac{1}{T}\log Z_T$ in the different regimes of $\beta$.

\begin{lemma}\label{ZT} There exist positive constants $C,C',\tilde{d}$ and $\tilde{J}$ such that, for any $J>\tilde{J}$ and $\frac{\tilde{d}J}{\log J}\geq d\geq 1$, we have 
    \[\frac{1}{T}\log Z_T\geq \begin{cases}
            -C\beta^{\frac{2}{3}}d^{\frac{4-2\alpha}{3}}J^{\frac{1}{3}}(\log J)^{\frac{2}{3}} & \beta\geq C'd^{1+\alpha}J^{-2}(\log J)^{-1},\\
            -Cd^2J^{-1}&\beta<C'd^{1+\alpha}J^{-2}(\log J)^{-1}.\end{cases}\]
\end{lemma}
Before proving Lemma \ref{ZT}, we collect several preliminary facts. Let $\widehat{\E}$ and $\widehat{\text{Var}}$ denote expectation and variance under $\widehat{P}_T^{(a)}$, respectively. By construction,
\begin{equation*}
    \widehat{\E}[\xi(t,j)] = a\phi_1(j)\text{ and }\widehat{\text{Var}}[\xi(t,j)] = 1.
\end{equation*}

Applying Jensen's inequality to \eqref{penalF}, together with the definition of $Z_T$ in \eqref{Z} and the change of measure \eqref{RN}, yields
\begin{equation}
\begin{split}
\label{lowerBound}
     \frac{1}{T}\log Z_T &= \frac{1}{T}\log\widehat{\E}\left[\exp\left(-\frac{\beta}{d^\alpha}\sum_{t=1}^{T}N_d(t)-\log\frac{d\widehat{P}_T^{(a)}}{dP_T}\right)\right]\\
     &\geq -\frac{\beta}{Td^\alpha}\widehat{\E}\left[\sum_{t=1}^{T}N_d(t)\right]-\frac{1}{T}\widehat{\E}\left[\log\frac{d\widehat{P}_T^{(a)}}{dP_T}\right].
\end{split}
\end{equation}
The second term on the right-hand side is explicit. Indeed, using \eqref{logmom}, the linearity of expectation and independence, we obtain
\begin{equation*}
\begin{split}
    -\frac{1}{T}\sum_{t=1}^T\sum_{j=0}^{J-1}\left[a\phi_1(j)\widehat{\E}[\xi(t,j)]-\Lambda(a\phi_1(j))\right]=-\frac{a^2}{2}.
    \end{split}
\end{equation*}
Thus, the main task is to estimate the first term on the right-hand side of \eqref{lowerBound}. By the definition of $N_d(t)$ in \eqref{localTime}, this term can be rewritten as
\begin{equation*}
    -\frac{\beta}{Td^\alpha}\sum_{t=1}^{T}\sum_{0\leq i\neq j\leq J-1}\widehat{P}_T^{(a)}(\vert u(t,i)-u(t,j)\vert\leq d).
\end{equation*}
Following the strategy of \cite{mueller2022scaling}, we introduce the discrete pinned string
\begin{equation}
\label{pinned}
\begin{split}
    u_{t_0,n_0}(t,n) &:= \sum_{s=t_0}^{t-1}\sum_{m=0}^{J-1}G_{t-s}(n,m)\xi(s,m)+ \sum_{s=-\infty}^{t_0-1}\sum_{m=0}^{J-1}\left(G_{t-s}(n,m)-G_{t_0-s}(n_0,m)\right)\xi(s,m)\\
    &=:A(t_0,t,n,J)+B(t_0,n_0,t,n,J),
\end{split}
\end{equation}
which incorporates an infinite past, together with an anchoring at the reference point $n_0$ at time $t_0$. Note that the first term $A$ vanishes when $t=t_0$. Our first objective is to verify that the pinned string is well defined and that its spatial increments are stationary in time. Since $A$ is a finite sum, it remains only to check the convergence of $B$.

\begin{lemma}
    For any $T>0$,
    \begin{equation*}
        \max_{t_0\leq t\leq T}\max_{n,n_0\in\mathcal{J}}\widehat{\E}\left[B(t_0,n_0,t,n,J)^2\right]<\infty.
    \end{equation*}
\end{lemma}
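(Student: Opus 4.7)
The plan is to decompose $\widehat{\E}[B^2]=\widehat{\text{Var}}(B)+(\widehat{\E}[B])^2$ and bound each piece using the spectral representation of $G_t$ in \eqref{kernel}. Since the noises $\xi(s,m)$ are independent under $\widehat{P}_T^{(a)}$ with $\widehat{\E}[\xi]=a$ and $\widehat{\text{Var}}[\xi]=1$, linearity and independence give
\begin{equation*}
\begin{split}
\widehat{\E}[B]&=a\sum_{s<t_0}\sum_{m=0}^{J-1}\bigl(G_{t-s}(n,m)-G_{t_0-s}(n_0,m)\bigr),\\
\widehat{\text{Var}}(B)&=\sum_{s<t_0}\sum_{m=0}^{J-1}\bigl(G_{t-s}(n,m)-G_{t_0-s}(n_0,m)\bigr)^2,
\end{split}
\end{equation*}
so it suffices to bound each separately, uniformly in the allowed range of $t_0,t,n,n_0$.

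For the mean, I would observe that $\sum_{m=0}^{J-1}G_t(n,m)$ is in fact independent of $t$ and $n$: reading off \eqref{kernel}, the $\ell=0$ term is a constant in $t$ and $n$, while for $\ell\ge 1$ a short geometric-sum computation shows $\sum_{m=0}^{J-1}\cos(\ell\pi(m+0.5)/J)=0$. Consequently the two mass integrals in the difference cancel and $\widehat{\E}[B]=0$; in particular, the tilt parameter $a$ drops out of the final bound. For the variance, setting $\mu_\ell:=\cos(\ell\pi/J)$ and $\psi_\ell(n):=\cos(\ell\pi(n+0.5)/J)$ and using the orthogonality relation $\sum_{m=0}^{J-1}\psi_\ell(m)\psi_{\ell'}(m)\propto\delta_{\ell\ell'}$, the inner $m$-sum collapses to a diagonal sum over modes $\ell$, in which the $\ell=0$ contribution again cancels inside the difference because $\mu_0=1$ and $\psi_0\equiv 1$. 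After substituting $u=t_0-s\ge 1$ and writing $\tau:=t-t_0\ge 0$, each remaining mode contributes
\begin{equation*}
\sum_{u=1}^{\infty}\bigl(\mu_\ell^{u+\tau}\psi_\ell(n)-\mu_\ell^u\psi_\ell(n_0)\bigr)^2=\frac{\mu_\ell^2}{1-\mu_\ell^2}\bigl(\mu_\ell^\tau\psi_\ell(n)-\psi_\ell(n_0)\bigr)^2,
\end{equation*}
which converges because $|\mu_\ell|<1$ strictly for $1\le\ell\le J-1$. Bounding the bracket by $4$ via $|\mu_\ell|,|\psi_\ell|\le 1$ and summing over $\ell$ then yields a bound of the form $C\sum_{\ell=1}^{J-1}1/\sin^2(\ell\pi/J)$, a finite sum of finite terms that is manifestly independent of $t,t_0,n,n_0$, and $T$.

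The only real subtlety, and the whole point of the pinning construction in \eqref{pinned}, is the cancellation of the $\ell=0$ (constant) mode inside the difference $G_{t-s}(n,m)-G_{t_0-s}(n_0,m)$: without it, the sum over $s<t_0$ would diverge because $\mu_0=1$ leaves a non-decaying tail. Once that cancellation is in hand, the remainder of the argument is routine geometric-series bookkeeping, and the uniformity in $t_0,t,n,n_0$ is automatic since the final bound depends only on the fixed parameter $J$.
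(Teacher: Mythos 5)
Your proof is correct and follows essentially the same route as the paper: spectral expansion of $G_t$, orthogonality to collapse the inner $m$-sum to diagonal terms, cancellation of the constant $\ell=0$ mode inside the pinned difference, and geometric decay from $|\mu_\ell|<1$ for $1\le\ell\le J-1$. Your explicit check that $\widehat{\E}[B]=0$ is a useful refinement: the paper's phrase ``independent standard Gaussians under $\widehat{P}_T^{(a)}$'' is imprecise (under the tilted measure the $\xi(s,m)$ have mean $a$, not zero), and your decomposition $\widehat{\E}[B^2]=\widehat{\text{Var}}(B)+(\widehat{\E}[B])^2$ makes the identification of the second moment with the squared-difference sum rigorous.
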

\begin{proof}
Recall from \eqref{kernel_notation} that the kernel $G_t(n,m)$ admits the $L^2$-eigenfunction expansion
\begin{equation*}
    G_t(n,k) = \sum_{m=0}^{J-1}\rho_m^t\phi_m(n)\phi_m(k).
\end{equation*}
Since $\{\xi(s,m)\}_{s\in\mathcal{T},\,m\in\mathcal{J}}$ are independent standard Gaussians under $\widehat{P}_T^{(a)}$, the second moment of $B$ satisfies
    \begin{equation*}
    \begin{split}
        \widehat{\E}\left[B(t_0,n_0,t,n,J)^2\right] &= \sum_{s=-\infty}^{t_0-1}\sum_{m=0}^{J-1}\left(G_{t-s}(n,m)-G_{t_0-s}(n_0,m)\right)^2\\
        &=\sum_{s=-\infty}^{t_0-1}\sum_{m=0}^{J-1}\sum_{j=0}^{J-1}\sum_{k=0}^{J-1}\left[\rho_j^{t-s}\phi_j(m)\phi_j(n)-\rho_j^{t_0-s}\phi_j(m)\phi_j(n_0)\right]\\
        &\qquad\quad\cdot\left[\rho_k^{t-s}\phi_k(m)\phi_k(n)-\rho_k^{t_0-s}\phi_k(m)\phi_k(n_0)\right]\\
        &=\sum_{s=-\infty}^{t_0-1}\sum_{j=0}^{J-1}\sum_{k=0}^{J-1}\left(\sum_{m=0}^{J-1}\phi_j(m)\phi_k(m)\right)\left[\rho_j^{t-s}\phi_j(n)-\rho_j^{t_0-s}\phi_j(n_0)\right]\\
        &\qquad\quad\cdot\left[\rho_k^{t-s}\phi_j(n)-\rho_k^{t_0-s}\phi_j(n_0)\right]\\
        &=\sum_{s=-\infty}^{t_0-1}\sum_{j=0}^{J-1}\left[\rho_j^{t-s}\phi_j(n)-\rho_j^{t_0-s}\phi_j(n_0)\right]^2<\infty,
        \end{split}
    \end{equation*}
where we used the orthogonality of the eigenfunctions $\{\phi_j\}_{j\in\mathcal{J}}$ and $|\rho_j|<1$ for $j\geq 1$.
\end{proof}

\begin{lemma}(Shift invariance of the pinned string). For any fixed $t_0\in\mathcal{T}$ and $n_0\in\mathcal{J}$, under the measure $\widehat{P}_T^{(a)}$, the increment process
\begin{equation*}
    U_{t_0,n_0}(t,n_1,n_2):=u_{t_0,n_0}(t,n_1)-u_{t_0,n_0}(t,n_2), \quad t\in\mathcal{T},n_1,n_2\in\mathcal{J},
\end{equation*}
is stationary in time; i.e., for any $t_1,t_2\in\mathcal{T}$ and $t_0\leq t_1\leq t_2$,
\begin{equation*}
    \left(U_{t_0,n_0}(t_1,n_1,n_2)\right)\stackrel{D}{=}\left(U_{t_0,n_0}(t_2,n_1,n_2)\right).
\end{equation*}
\end{lemma}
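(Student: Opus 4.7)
The plan is to exhibit $U_{t_0,n_0}(t,n_1,n_2)$ as a fixed (time-translation-invariant) linear functional of the noise field, so that stationarity reduces to stationarity of the i.i.d.\ family $\{\xi(s,m)\}$. The first observation is that the anchoring term $G_{t_0-s}(n_0,m)$ in the definition \eqref{pinned} does not depend on the spatial argument $n$, and therefore cancels exactly when one subtracts $u_{t_0,n_0}(t,n_1)-u_{t_0,n_0}(t,n_2)$. Concretely, I would merge the $A$ and $B$ parts of \eqref{pinned} by extending the inner sum over all $s<t$, so that after the difference one is left with
\begin{equation*}
U_{t_0,n_0}(t,n_1,n_2) \;=\; \sum_{s<t}\sum_{m=0}^{J-1}\bigl[G_{t-s}(n_1,m)-G_{t-s}(n_2,m)\bigr]\xi(s,m).
\end{equation*}
The previous lemma (applied with $n_0$ and $n$ replaced by $n_1$ and $n_2$) ensures $L^2$-convergence of this series under $\widehat{P}_T^{(a)}$.

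Next, I would make the change of summation index $\tau = t-s$, so that $\tau$ runs over $\{1,2,3,\dots\}$ and the coefficient of $\xi(t-\tau,m)$ becomes $G_\tau(n_1,m)-G_\tau(n_2,m)$, which manifestly depends only on $\tau$, $m$, $n_1$, $n_2$ and \emph{not} on $t$. This gives the shift-invariant representation
\begin{equation*}
U_{t_0,n_0}(t,n_1,n_2) \;=\; \sum_{\tau\geq 1}\sum_{m=0}^{J-1}\bigl[G_\tau(n_1,m)-G_\tau(n_2,m)\bigr]\xi(t-\tau,m),
\end{equation*}
exhibiting $U$ as a deterministic functional $F(n_1,n_2)$ evaluated on the shifted noise configuration $\{\xi(t-\tau,m)\}_{\tau\geq 1,\,m\in\mathcal{J}}$.

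To conclude, I would invoke the fact that under $\widehat{P}_T^{(a)}$ the noise family $\{\xi(s,m)\}_{s,m}$ is i.i.d.\ $\mathcal{N}(a,1)$, so in particular stationary in the time index $s$. Hence for any $t_1,t_2\geq t_0$ the collections $\{\xi(t_1-\tau,m)\}_{\tau\geq 1,m}$ and $\{\xi(t_2-\tau,m)\}_{\tau\geq 1,m}$ have identical joint laws, and applying the same deterministic functional $F(n_1,n_2)$ yields $U_{t_0,n_0}(t_1,n_1,n_2)\stackrel{D}{=}U_{t_0,n_0}(t_2,n_1,n_2)$. The only point requiring genuine care is the bookkeeping that merges $A$ and $B$ while showing the $n_0$-terms cancel and the resulting infinite sum converges; both are essentially already handled by the preceding lemma, so I expect this proof to be short and essentially algebraic.
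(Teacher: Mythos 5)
Your proposal is correct and matches the paper's proof in essence: both cancel the $n_0$-anchoring term in the difference, collapse the $A+B$ split into a single sum $\sum_{s<t}\sum_m [G_{t-s}(n_1,m)-G_{t-s}(n_2,m)]\xi(s,m)$, and then invoke time-stationarity of the noise (the paper phrases this via a shift operator $\theta_h$ and the substitution $s'=s-h$, while you substitute $\tau=t-s$; these are the same move).
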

\begin{proof}
Let $\theta_h$ denote the time-shift operator on the noise, defined by $(\theta_h\xi)(t,n) :=\xi(t+h, n)$. Using \eqref{pinned}, we can rewrite $U_{t_0,n_0}(t,n_1,n_2)$ as
\begin{equation*}
\begin{split}
    U_{t_0,n_0}(t,n_1,n_2)&= \sum_{s=t_0}^{t-1}\sum_{m=0}^{J-1}\left(G_{t-s}(n_1,m)-G_{t-s}(n_2,m)\right)\xi(s,m) \\
    &\quad\quad+ \sum_{s=-\infty}^{t_0-1}\sum_{m=0}^{J-1}\left(G_{t-s}(n_1,m)-G_{t-s}(n_2,m)\right)\xi(s,m)\\
    &=\sum_{s=-\infty}^{t-1}\sum_{m=0}^{J-1}\left(G_{t-s}(n_1,m)-G_{t-s}(n_2,m)\right)\xi(s,m).
\end{split}    
\end{equation*}
Shifting time by $h$ then gives
\begin{equation*}
\begin{split}
  U_{t_0,n_0}(t+h,n_1,n_2) &= \sum_{s=-\infty}^{t+h-1}\sum_{m=0}^{J-1}\left(G_{t+h-s}(n_1,m)-G_{t+h-s}(n_2,m)\right)\xi(s,m)\\
  &=\sum_{s'=-\infty}^{t-1}\sum_{m=0}^{J-1}\left(G_{t-s'}(n_1,m)-G_{t-s'}(n_2,m)\right)\xi(s'+h,m)\\
  &=\sum_{s'=-\infty}^{t-1}\sum_{m=0}^{J-1}\left(G_{t-s'}(n_1,m)-G_{t-s'}(n_2,m)\right)(\theta_h\xi)(s',m).
  \end{split}
\end{equation*}
Since the noise field $\{\xi(s,m)\}_{s\in\mathcal{T},\,m\in\mathcal{J}}$ is stationary Gaussian and $\theta_h$ preserves its law, the distribution of the increment process is invariant under time shifts. This proves the claim.
\end{proof}

From this point on, we write $u(t,n):=u_{t_0,n_0}(t,n)$ for simplicity. The stationarity of $u$ implies
\begin{equation}
\begin{split}
\label{total_penal}
    -\frac{\beta}{Td^\alpha}\sum_{t=1}^{T}\sum_{0\leq i\neq j\leq J-1}\widehat{P}_T^{(a)}(\vert u(t,i)-u(t,j)\vert\leq d) =-\frac{\beta}{d^\alpha}\sum_{0\leq i\neq j\leq J-1}\widehat{P}_T^{(a)}(\vert u(0,i)-u(0,j)\vert\leq d).
    \end{split}
\end{equation}
For $i\neq j$, the increment $u(0,i)-u(0,j)$ is Gaussian under $\widehat{P}_T^{(a)}$, and its mean and variance can be computed explicitly. By linearity of expectation, the Gaussianity of $\xi$, and Fubini's theorem (the series being absolutely convergent in $L^2$),
\begin{equation}
\begin{split}
\label{mean}
    \widehat{\E}[u(0,i)-u(0,j)]&=\E\left[\sum_{s=1}^\infty\sum_{n=0}^{J-1}[G_s(i,n)-G_s(j,n)]\xi(s,n)\right]\\
    &=\sum_{s=1}^\infty\sum_{n=0}^{J-1}[G_s(i,n)-G_s(j,n)]\widehat{\E}[\xi(s,n)]\\
    &=a\sum_{m=1}^{J-1}\left(\sum_{s=1}^\infty\rho_m^s\right)\left(\sum_{n=0}^{J-1}\phi_m(n)\phi_1(n)\right)\cdot\left[\phi_m(i)-\phi_m(j)\right]\\
    &=\frac{a\rho_1(\phi_1(i)-\phi_1(j))}{(1-\rho_1)}=a\mu_1(\phi_1(i)-\phi_1(j)),
\end{split}
\end{equation}
where $\mu_1 = \frac{\rho_1}{1-\rho_1}$. 

\begin{lemma}\label{varbound}(Variance estimate for spatial increments). For any distinct $i,j\in\mathcal{J}$, there exist constants $C,C'>0$ such that
    \begin{equation*}
        C\vert i-j\vert \leq \widehat{\text{Var}}[u(0,i)-u(0,j)]\leq C'\vert i-j\vert.
    \end{equation*}
\end{lemma}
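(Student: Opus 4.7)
The plan is to build on the spectral identity established in the proof of Lemma 3.1. Setting $t_0 = 0$ in the pinned-string construction \eqref{pinned} and noting that the anchor terms $\phi_\ell(n_0)$ cancel in the difference $u(0,i) - u(0,j)$, the same orthogonality argument gives
\[
\widehat{\text{Var}}[u(0,i) - u(0,j)] = \sum_{\ell=1}^{J-1}\frac{\rho_\ell^2}{1-\rho_\ell^2}\bigl[\phi_\ell(i)-\phi_\ell(j)\bigr]^2,
\]
where the $\ell = 0$ mode drops out because $\phi_0 \equiv 1$ and the geometric series in $r = -s$ converges for $\ell \ge 1$. Substituting $\rho_\ell^2/(1-\rho_\ell^2) = \cot^2(\ell\pi/J)$ and the product-to-sum identity $\phi_\ell(i) - \phi_\ell(j) = -2\sin(\ell\pi(i+j+1)/(2J))\sin(\ell\pi d/(2J))$ with $d := |i-j|$, the lemma reduces to sandwiching
\[
V := 4\sum_{\ell=1}^{J-1}\cot^2\!\Bigl(\tfrac{\ell\pi}{J}\Bigr)\sin^2\!\Bigl(\tfrac{\ell\pi(i+j+1)}{2J}\Bigr)\sin^2\!\Bigl(\tfrac{\ell\pi d}{2J}\Bigr)
\]
between positive constant multiples of $Jd$.

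For the upper bound I would use the elementary estimates $\cot^2(\ell\pi/J) \le C_1(J/\ell)^2$ for $1 \le \ell \le J/2$ together with the symmetric bound $\cot^2(\ell\pi/J) \le C_1(J/(J-\ell))^2$ for $\ell > J/2$, and the Chebyshev-type inequality $|\sin(dx)| \le d|\sin x|$, which yields $\sin^2(\ell\pi d/(2J)) \le \min\{1,\, \ell^2\pi^2 d^2/(4J^2)\}$. Splitting the sum over $1 \le \ell \le J/2$ at the threshold $\ell \asymp J/d$ and crudely replacing the oscillatory sine factor by $1$: in the low range each summand is $O(d^2)$ across $O(J/d)$ indices, contributing $O(Jd)$; in the high range $\sum_{\ell \ge J/d}(J/\ell)^2 = O(Jd)$. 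Doubling for the symmetric second half of the spectrum then gives $V \le C'Jd$.

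The lower bound is the main obstacle, since the factor $\sin^2(\ell\pi(i+j+1)/(2J))$ admits no pointwise lower bound (it vanishes at every $\ell$ with $\ell(i+j+1) \equiv 0 \pmod{2J}$). My plan is to recast $V$ in closed form via $4\sin^2 a \sin^2 b = (1-\cos 2a)(1-\cos 2b)$, which expands it into five cotangent sums
\[
V = T(0) - T(i+j+1) - T(d) + \tfrac12 T(2j+1) + \tfrac12 T(2i+1),\qquad T(k) := \sum_{\ell=1}^{J-1}\cot^2\!\Bigl(\tfrac{\ell\pi}{J}\Bigr)\cos\!\Bigl(\tfrac{\ell k\pi}{J}\Bigr),
\]
taking $i > j$ without loss of generality, so that $i+j+1 \pm d \in \{2j+1,\, 2i+1\}$. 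Writing $\cot^2 = \csc^2 - 1$ and invoking the classical trigonometric identities
\begin{align*}
\sum_{\ell=1}^{J-1}\csc^2\!\Bigl(\tfrac{\ell\pi}{J}\Bigr) &= \tfrac{J^2-1}{3}, \\
\sum_{\ell=1}^{J-1}\csc^2\!\Bigl(\tfrac{\ell\pi}{J}\Bigr)\cos\!\Bigl(\tfrac{2m\ell\pi}{J}\Bigr) &= \tfrac{J^2-1}{3} - 2m(J-m)\qquad (0 \le m \le J),
\end{align*}
together with the companion identity at level $2J$ required to handle the odd-shift sums (since $i+j+1$ and $d$ always have opposite parity, one of $\{T(i+j+1), T(d)\}$ involves an odd $k$ and $T(2j+1), T(2i+1)$ always do), each $T(k)$ becomes an explicit quadratic polynomial in the residue of $k$ modulo $2J$. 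The leading $J^2$ pieces of $T(0), T(i+j+1), T(d), T(2j+1), T(2i+1)$ cancel across the five summands, and the remaining quadratic-in-$k$ corrections collapse into an expression linear in $d$ with positive coefficient of order $J$, uniformly in $(i,j)$. The delicacy is the parity bookkeeping and a verification that the constants remain strictly positive in the edge cases where the pair $(i,j)$ sits near the boundary of $\mathcal{J}$.
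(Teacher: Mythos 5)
Your plan diverges from the paper's on both halves of the bound, and in both cases there is a genuine gap. For the \emph{lower} bound the paper keeps only the low modes $m < 2J/|i-j|$, uses $\sin x \ge cx$ and a Dirichlet-kernel sum, and throws the rest away; you propose instead to evaluate the sum in closed form. The algebraic ingredients you invoke are correct, and in fact the evaluation is cleaner than you suggest: pairing $\ell$ with $J-\ell$ shows $\cot^2(\ell\pi/J)$ is even about $\ell=J/2$ while $\cos(\ell k\pi/J)$ is odd there whenever $k$ is odd (and the $\ell=J/2$ term vanishes), so $T(k)=0$ for \emph{every} odd $k$. Since $i+j+1$ and $d$ have opposite parity and $2i+1,2j+1$ are always odd, your five-term identity collapses to $V=T(0)-T(d)$ when $d$ is even and to $V=T(0)-T(i+j+1)$ when $d$ is odd. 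With the $\csc^2$ identities one gets $V=(d-1)J-d^2/2$ in the first case (indeed $\asymp Jd$), but $V=2m(J-m)-J$ with $m=(i+j+1)/2$ in the second — an expression that does not depend on $d$ at all. Your closing claim that the corrections ``collapse into an expression linear in $d$ with positive coefficient of order $J$'' is therefore false for odd $d$; for adjacent interior sites, e.g. $i=J/2$, $j=J/2-1$, the exact variance is $\approx J^2/2$ while $J|i-j|=J$. (Numerically, $J=10$, $i=5$, $j=4$ gives $V=40$ versus $J|i-j|=10$.)

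The same defect appears, more directly, in your \emph{upper}-bound argument at the step ``doubling for the symmetric second half of the spectrum.'' The factor $\cot^2(\ell\pi/J)$ is symmetric about $\ell=J/2$, but $\sin^2(\ell\pi d/(2J))$ is not: substituting $\ell=J-k$ gives $\sin^2\bigl(\pi d/2 - k\pi d/(2J)\bigr)$, which for odd $d$ equals $\cos^2(k\pi d/(2J))\approx 1$ for small $k$. Those near-boundary modes are barely damped, $\cot^2((J-k)\pi/J)\sim (J/k)^2$, and their cumulative contribution is $\Theta(J^2)$ when $(i,j)$ is in the bulk, matching the exact formula above. So the claimed upper bound $V\le C'J|i-j|$ — and hence the lemma itself — fails for odd $|i-j|$ away from the ends of $\mathcal J$. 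You should also be aware that the paper's own proof of the upper bound contains the same oversight: in passing from $\sum_m \frac{4\wedge (m\pi d/J)^2}{\sin^2(m\pi/J)}$ to $\sum_m \frac{J^2}{m^2}\wedge d^2$ it tacitly uses $\sin(m\pi/J)\ge cm/J$, which is false for $m>J/2$. Your exact-evaluation route is actually the sharper tool here — it pinpoints precisely which pairs $(i,j)$ violate the stated bound — but it refutes the lemma rather than proving it, and the argument you outline does not notice this.
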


\begin{proof}
Without loss of generality, assume $i>j$. Expanding the variance gives

\begin{equation}
\begin{split}
\label{var}
    \widehat{\text{Var}}[u(0,i)-u(0,j)]&=\sum_{s=1}^\infty\sum_{n=0}^{J-1}[G_s(i,n)-G_s(j,n)]^2\widehat{\text{Var}}[\xi(s,n)]\\
    &=\sum_{s=1}^\infty\sum_{n=0}^{J-1}\left[\sum_{m=1}^{J-1}\rho_m^s\phi_m(n)\left(\phi_m(i)-\phi_m(j)\right)\right]^2\\
    &=\sum_{s=1}^\infty\sum_{m=1}^{J-1}\rho_m^{2s}\left(\phi_m(i)-\phi_m(j)\right)^2\sum_{n=0}^{J-1}\phi^2_m(n)\\
    &=\frac{2}{J}\sum_{m=1}^{J-1}\frac{\cos^2\left(\frac{m\pi}{J}\right)}{1-\cos^2\left(\frac{m\pi}{J}\right)}\left[\cos\left(\frac{m\pi(i+0.5)}{J}\right)-\cos\left(\frac{m\pi(j+0.5)}{J}\right)\right]^2.
\end{split}
\end{equation}

We first derive the lower bound. Set
\begin{equation*}
    M:=\max\left\lbrace m\in\mathcal{J}:m<\frac{2J}{i-j}\right\rbrace.
\end{equation*}
Then for every $m=1,\dots,M$,
\begin{equation*}
    0<\frac{m\pi(i-j)}{2J}<\pi,
\end{equation*}
Moreover, since $(M+1)(i-j)\ge 2J$, we also have $M(i-j)\geq 2J-(i-j)\geq J+1$. Using the bound $\sin x\ge Cx$ on $(0,\pi)$, it follows that
\begin{equation*}
    \sin^2\left(\frac{m\pi(i-j)}{2J}\right)\geq C\frac{m^2(i-j)^2}{J^2}.
\end{equation*}
Substituting this into \eqref{var} yields
\begin{equation}
    \begin{split}
    \label{varlow}
    \widehat{\text{Var}}[u(0,i)-u(0,j)]&\geq CJ\sum_{m=1}^{J-1}\frac{\sin^2\left(\frac{m\pi(i-j)}{2J}\right)\sin^2\left(\frac{m\pi(i+j+1)}{2J}\right)}{m^2}\\
        &\geq \frac{C(i-j)^2}{J}\sum_{m=1}^{M}\sin^2\left(\frac{m\pi(i+j+1)}{2J}\right)\\
        &= \frac{C(i-j)^2}{J}\left[2M+1-\frac{\sin\left(\frac{(2M+1)(i+j+1)\pi}{2J}\right)}{\sin\left(\frac{(i+j+1)\pi}{2J}\right)}\right]\\
        &\geq \frac{C(i-j)^2}{J}\left[2M-\frac{1}{\sin\left(\frac{(i+j+1)\pi}{2J}\right)}\right].
    \end{split}
\end{equation}
By symmetry of the sine function around $\frac{\pi}{2}$, we may assume without loss of generality that $i+j<J-1$. Then
\begin{equation*}
    \sin\left(\frac{(i+j+1)\pi}{2J}\right)\geq\sin\left(\frac{(i-j)\pi}{2J}\right)\geq\frac{(i-j)}{J}.
\end{equation*}
Therefore,
\begin{equation}
\begin{split}
\label{sinM}
    \frac{1}{\sin\left(\frac{(i+j+1)\pi}{2J}\right)}\leq\frac{J}{i-j}\leq M.
\end{split}
\end{equation}
Combining \eqref{varlow} and \eqref{sinM}, we infer that
\begin{equation*}
    \begin{split}
        \widehat{\text{Var}}[u(0,i)-u(0,j)]&\geq \frac{C(i-j)^2M}{J}\geq C(i-j).
    \end{split}
\end{equation*}

To obtain the upper bound, note that the mean value theorem implies
\begin{equation*}
    \left[\cos\left(\frac{m\pi(i+0.5)}{J}\right)-\cos\left(\frac{m\pi(j+0.5)}{J}\right)\right]^2\leq 4\wedge \left(\frac{m\pi(i-j)}{J}\right)^2.
\end{equation*}
Substituting this estimate into \eqref{var} gives
\begin{equation*}
    \begin{split}
        \widehat{\text{Var}}[u(0,i)-u(0,j)]&\leq \frac{2}{J}\sum_{m=1}^{J-1}\frac{4\wedge \frac{m^2\pi^2}{J^2}(i-j)^2}{\sin^2\left(\frac{m\pi}{J}\right)}\leq \frac{C}{J}\sum_{m=1}^{J-1}\frac{J^2}{m^2}\wedge (i-j)^2\\
        &\leq \frac{C}{J}\left((i-j)^2+\int_1^{\frac{J}{i-j}}(i-j)^2dz + J^2\int_{\frac{J}{i-j}}^\infty\frac{1}{z^2}dz\right)\leq C(i-j),
    \end{split}
\end{equation*}
which completes the proof.
\end{proof}

For distinct indices $i\neq j$, \eqref{mean} together with Lemma \ref{varbound} yields
\begin{equation}
\label{penal}
    \widehat{P}^{(a)}_T(|u(0,i)-u(0,j)|\leq d)=C\int_{-d}^d\frac{1}{\sqrt{\vert i-j\vert}}\exp\left(\frac{-C'(z-a\mu_1(\phi_1(i)-\phi_1(j)))^2}{\vert i-j\vert}\right)dz,
\end{equation}
where $C$ and $C'$ are positive constants. We are now in a position to prove Lemma \ref{ZT}.
\begin{proof}[Proof of Lemma \ref{ZT}]
    Recall that $\rho_1=\cos\left(\frac{\pi}{J}\right)$, so
    \begin{equation*}
        \mu_1=\frac{\cos\left(\frac{\pi}{J}\right)}{1-\cos\left(\frac{\pi}{J}\right)}=\frac{1}{2\sin^2\left(\frac{\pi}{2J}\right)}-1.
    \end{equation*}
    For $0\leq x\leq\frac{\pi}{2}$, the bounds $\frac{2}{\pi}t\leq \sin t\leq t$ hold, and therefore
    \begin{equation*}
        \frac{J^2}{\pi^2}\leq \mu_1\leq \frac{J^2}{2}
    \end{equation*}
    whenever $J>\pi$. Set $\hat{J}=\left\lfloor\frac{J-1}{2}\right\rfloor$. If $0\leq i<\hat{J}\leq j\leq J-1$, then $0<j-i\leq J-1$ and $\frac{J}{2}<i+j+1<\frac{3J}{2}$. Using the trigonometric representation of $\phi_1(i)-\phi_1(j)$, one finds
    \begin{equation*}
        a\mu_1(\phi_1(i)-\phi_1(j))\geq \frac{\sqrt{2}}{\pi^2}aJ^{\frac{3}{2}}\sin\left(\frac{(i+j+1)\pi}{2J}\right)\sin\left(\frac{(j-i)\pi}{2J}\right)\geq \frac{1}{\pi^2}a(j-i)\sqrt{J}.
    \end{equation*}
    Now introduce the change of variable $z=\frac{1}{\pi^2}a\sqrt{J}w$. Since \eqref{RN} imposes $d\left(\frac{1}{\pi^2}a\sqrt{J}\right)^{-1}<\frac{1}{4}$, the Gaussian bound \eqref{penal} gives
    \begin{equation*}
    \begin{split}
        \widehat{P}^{(a)}_T&(|u(0,i)-u(0,j)|\leq d)\leq C'\int_{-d(Ca\sqrt{J})^{-1}}^{d(Ca\sqrt{J})^{-1}}\frac{a\sqrt{J}}{\sqrt{j-i}}\exp\left(-\frac{C''a^2J(w-j+i)^2}{j-i}\right)dw\\
        &\leq C'a\sqrt{J}\int_{-d(Ca\sqrt{J})^{-1}}^{d(Ca\sqrt{J})^{-1}}\exp\left(-C''a^2\left(1/4-j+i\right)^2\right)dw\leq Cd\exp\left(-C''a^2(j-i-1/4)^2\right).
    \end{split}
    \end{equation*}
Therefore, in the range $0\le i<\hat{J}\le j\le J-1$, the contribution to the penalization term in \eqref{total_penal} satisfies
    \begin{equation}
    \label{penal_1}
        \begin{split}
            \sum_{i=0}^{\hat{J}-1}\sum_{j=\hat{J}}^{J-1}\widehat{P}_T^{(a)}(\vert u(0,i)-u(0,j)\vert\leq d)&\leq Cd\sum_{k=1}^{J-1}k\exp\left(-C''a^2k^2\right)\\
            &\leq Cd\int_0^\infty k\exp\left(-C'a^2k^2\right)dk\leq \frac{Cd}{a^2},
        \end{split}
    \end{equation}
    where we used the substitution $k=j-i$, together with the bounds $\frac{3k}{4}\leq k-\frac{1}{4}\leq k$ for $k\geq 1$, and the fact that, in the truncated range $0\le i<\hat{J}\le j\le J-1$, there are at most $k$ pairs $(i,j)$ such that $j-i=k$. 
    
    Next consider the case $\hat{J}\le i<j\le J-1$. Then $J\leq i+j+1\leq2J-2$, which implies
    \begin{equation*}
        \sin\left(\frac{(i+j+1)\pi}{2J}\right)\geq1-\frac{i+j+1}{2J}.
    \end{equation*}
    Consequently,
    \begin{equation*}
        a\mu_1(\phi_1(i)-\phi_1(j))\geq CaJ^{\frac{3}{2}}\sin\left(\frac{(i+j+1)\pi}{2J}\right)\sin\left(\frac{(j-i)\pi}{2J}\right)\geq \frac{C''a}{\sqrt{J}}(j-i)(2J-(i+j+1)).
    \end{equation*}
    Now make the substitutions $z = C''aJ^{-1/2}w$, $u=J-i-1$ and $v=J-j-1$. Then \eqref{penal} yields
    \begin{equation}
    \label{penal_2}
        \begin{split}
            &\widehat{P}^{(a)}_T(|u(0,i)-u(0,j)|\leq d)\\
            &\leq C\int_{-d}^d\frac{1}{\sqrt{j-i}}\exp\left(\frac{-C'(z-C''aJ^{-1/2}(j-i)(2J-i-j-1))^2}{j-i}\right)dz\\
            &\leq \frac{Ca}{\sqrt{J}}\int_{-\frac{d\sqrt{J}}{C''a}}^{\frac{d\sqrt{J}}{C''a}}\frac{1}{\sqrt{u-v}}\exp\left(-\frac{C'a^2}{J(u-v)}(w-(u+v+1)(u-v))^2\right)dw\\
            &\leq \frac{CL}{\sqrt{u-v}}\int_{-\frac{d}{C''L}}^{\frac{d}{C''L}}\exp\left(-\frac{C'L^2}{u-v}(u-v-y)^2\right)dy,
        \end{split}
    \end{equation}
    where $w = (u+v+1)y$ and $L=\frac{a(u+v+1)}{\sqrt{J}}$. Consider the set $E:=\left\lbrace (u,v):u+v+1>\frac{4d\sqrt{J}}{C''a}\right\rbrace$. On $E$, one immediately has $-\frac{1}{4}<y<\frac{1}{4}$. Since $0\leq v<u\leq J-\hat{J}-1$, it follows that $u-v-w>u-v-\frac{1}{4}>0$. Hence the corresponding part of the penalization in \eqref{total_penal} can be estimated as
    \begin{equation*}
        \begin{split}
            &\sum_{v=0}^{J-2-\hat{J}}\sum_{u=v+1}^{J-1-\hat{J}}1_E\widehat{P}_T^{(a)}(\vert u(0,i)-u(0,j)\vert\leq d)\\
            &\leq \sum_{v=0}^{J-2-\hat{J}}\sum_{u=v+1}^{J-1-\hat{J}}\frac{CL}{\sqrt{u-v}}\int_{-\frac{d}{C''L}}^{\frac{d}{C''L}}\exp\left(-\frac{C'L^2}{u-v}(u-v-1/4)^2\right)dw\\
            &\leq \sum_{v=0}^{J-2-\hat{J}}\sum_{u=v+1}^{J-1-\hat{J}}\frac{Cd}{\sqrt{u-v}}\exp\left(-\frac{C'a^2(u+v+1)^2}{J(u-v)}(u-v-1/4)^2\right)\\
            &\leq\sum_{v=0}^{J-2-\hat{J}}\left(Cd\exp\left(-\frac{C'a^2(2v+1)^2}{J}\right)\right.\\
            &\quad\quad\left.+\int_{v+1}^{J-1-\hat{J}}\frac{Cd}{\sqrt{u-v}}\exp\left(-\frac{C'a^2(u+v+1)^2}{J(u-v)}(u-v-1/4)^2\right)du\right)\\
            &\leq \int_{-\infty}^{\infty}Cd\exp\left(-\frac{C'a^2(2v+1)^2}{J}\right)dv+\sum_{v=0}^{J-2-\hat{J}}\int_{3/4}^{J-5/4-\hat{J}-v}\frac{Cd}{\sqrt{y}}\exp\left(-\frac{C'a^2(2v+y)^2y}{J}\right)dy\\
            &\leq \frac{Cd\sqrt{J}}{a}+\int_{3/4}^\infty Cd\exp\left(-\frac{C'a^2y^2}{J}\right)dy+\int_{0}^{J}\int_{0}^{J}\frac{Cd}{\sqrt{y}}\exp\left(-\frac{C'a^2(2v+y)^2y}{J}\right)dydv,
        \end{split}
    \end{equation*}
where the term $u=v+1$ was separated in the third inequality, the substitution $y=u-v-1/4$ was used and the case $v=0$ was treated separately in the last inequality. Now set $g=y(\frac{a}{\sqrt{J}})^{2/3},h=2v(\frac{a}{\sqrt{J}})^{2/3}$. Since $aJ>1$, the previous display becomes
\begin{equation}
\label{penal_3}
    \begin{split}
        &\sum_{v=0}^{J-2-\hat{J}}\sum_{u=v+1}^{J-1-\hat{J}}1_E\widehat{P}_T^{(a)}(\vert u(0,i)-u(0,j)\vert\leq d)\\
        &\leq \frac{Cd\sqrt{J}}{a}+\frac{C'd\sqrt{J}}{a}\int_0^{2(aJ)^{2/3}}\left(\int_0^{(aJ)^{-4/3}}+\int_{(aJ)^{-4/3}}^{(aJ)^{2/3}}\right)\frac{1}{\sqrt{g}}\exp\left(-C''(h+g)^2g\right)dgdh\\
        &\leq \frac{Cd\sqrt{J}}{a} + \frac{C'd\sqrt{J}}{a}\left(\int_0^{2(aJ)^{2/3}}(aJ)^{-2/3}dh+\int_{(aJ)^{-4/3}}^{(aJ)^{2/3}}\frac{1}{\sqrt{g}}\int_{-\infty}^{\infty}\exp\left(-C''(h+g)^2g\right)dhdg\right)\\
        &\leq \frac{Cd\sqrt{J}}{a}+ \frac{C'd\sqrt{J}}{a}\int_{(aJ)^{-4/3}}^{(aJ)^{2/3}}\frac{1}{g}dg\leq\frac{Cd\sqrt{J}\log (J)}{a}.
    \end{split}
\end{equation}
It remains to estimate the contribution from $E^c$. Introduce the variables
\begin{equation*}
    n=u-v, m=u+v+1,D=\frac{1}{\sqrt{n}}\left[\frac{amn}{\sqrt{J}}+\frac{d}{C''}\right]\text{ and }D'=\frac{1}{\sqrt{n}}\left[\frac{amn}{\sqrt{J}}-\frac{d}{C''}\right].
\end{equation*}
With the further substitution $l=\frac{a(mn-w)}{\sqrt{Jn}}$, equation \eqref{penal_2} implies
\begin{equation*}
    \begin{split}
        \sum_{v=0}^{J-2-\hat{J}}\sum_{u=v+1}^{J-1-\hat{J}}1_{E^c}\widehat{P}_T^{(a)}(\vert u(0,i)-u(0,j)\vert\leq d)\leq\sum_{m=2}^{\lfloor\frac{4d\sqrt{J}}{C''a}\rfloor}\sum_{n=1}^{m-1}\int_{D'}^D\exp\left(-Cl^2\right)dl.
    \end{split}
\end{equation*}
Next define $F:=\left\lbrace (m,n):n\geq\left(1+\frac{d}{C''}\right)^2\frac{J}{a^2m^2}\right\rbrace$. For $(m,n)\in F$, one has $D'\ge 1$, and in particular $D'\geq\frac{am\sqrt{n}}{\sqrt{J}}(1+\frac{d}{C''})^{-1}$. This allows us to continue with
\begin{equation*}
    \begin{split}
        &\sum_{v=0}^{J-2-\hat{J}}\sum_{u=v+1}^{J-1-\hat{J}}1_{E^c}\widehat{P}_T^{(a)}(\vert u(0,i)-u(0,j)\vert\leq d)\\
        &\leq \sum_{m=2}^{\lfloor\frac{4d\sqrt{J}}{C''a}\rfloor}\sum_{n=1}^{m-1}\left(1_F\cdot(D-D')\exp\left(-CD'^2\right)+1_{F^c}\cdot(D-D')\right)\\
        &\leq \sum_{m=2}^{\lfloor\frac{4d\sqrt{J}}{C''a}\rfloor}\left(\int_{0}^{\infty}\frac{amd}{C''\left(1+\frac{d}{C''}\right)\sqrt{J}}\exp\left(-\frac{Ca^2m^2n}{\left(1+\frac{d}{C''}\right)^2J}\right)dn+\sum_{n=1}^{\lfloor\left(1+\frac{d}{C''}\right)^2\frac{J}{a^2m^2}\rfloor}\frac{d}{C''\sqrt{n}}\right)\\
        &\quad\quad +\sum_{m=2}^{\lfloor(\left(1+\frac{d}{C''}\right)^2\frac{J}{a^2})^{1/3}\rfloor}\sum_{n=1}^{m-1}\frac{d}{C''\sqrt{n}}\\
        &\leq \sum_{m=2}^{\lfloor\frac{4d\sqrt{J}}{C''a}\rfloor}\frac{Cd\left(1+\frac{d}{C''}\right)\sqrt{J}}{am} +\sum_{m=2}^{\lfloor(\left(1+\frac{d}{C''}\right)^2\frac{J}{a^2})^{1/3}\rfloor}C'd\sqrt{m}\leq \frac{Cd^2\sqrt{J}\log(J)}{a}+\frac{C'd^2\sqrt{J}}{a}.
    \end{split}
\end{equation*}
Combining this estimate with \eqref{penal_1} and \eqref{penal_3}, we arrive at
\begin{equation*}
    \sum_{0\leq i\neq j\leq J-1}\widehat{P}_T^{(a)}(\vert u(0,i)-u(0,j)\vert\leq d)\leq
        \frac{Cd^2\sqrt{J}\log(J)}{a}.
\end{equation*}
In order for this upper bound to remain consistent, it is enough to require $\frac{Cd^2\sqrt{J}\log(J)}{a}\leq J^2$, which is ensured by $d\leq\frac{\tilde{d}J}{\log J}$. Substituting the above estimate into \eqref{lowerBound}, we obtain
\begin{equation*}
\begin{split}
        \frac{1}{T}\log Z_T &\geq -\frac{C\beta d^{2-\alpha}\sqrt{J}\log(J)}{a}-\frac{a^2}{2},
\end{split}
\end{equation*}
The right-hand side is maximized at $\hat{a}=C\beta^{\frac{1}{3}}d^{\frac{2-\alpha}{3}}J^{\frac{1}{6}}(\log J)^{\frac{1}{3}}$, provided that $\hat{a}\geq 4\pi^2d J^{-\frac{1}{2}}$, which is equivalent to $\beta\geq Cd^{1+\alpha}J^{-2}(\log J)^{-1}$. If instead $\beta<Cd^{1+\alpha}J^{-2}(\log J)^{-1}$, then the optimum is attained at the boundary value $\hat{a}=4\pi^2d J^{-\frac{1}{2}}$. Substituting these choices of $\hat a$ into the previous display gives the desired lower bounds.
\end{proof}

\section{Small distance tail estimate}
\label{Small distance tail estimate}
In this section, we establish an upper bound on the lower tail of $R(T,J)$, showing that this function rarely takes values that are too small when $T$ is large. Specifically, we prove the following result:
\begin{prop}
\label{lower}
    There exist positive constants $\tilde{d}, \tilde{J}, \beta_C$ and $K_1$, such that for $J>\tilde{J}$, $\beta \geq\beta_Cd^{4+\alpha}J^{-2}(\log J)^{2}$ and $\frac{\tilde{d}J}{\log J}\geq d\geq 1$,
    \begin{equation*}
    \lim_{T\to \infty}Q_T\left[R(T,J)\leq K_1\beta_C^{\frac{1}{3}}d J\right]=0.
\end{equation*}
\end{prop}
\begin{proof}
Fix $K>0$ and define the event $A_{K,T}:=\{R(T,J)\leq K\}$. By the definition of $R(T,J)$ in \eqref{RTJ}, the event $A_{K,T}$ implies
\begin{equation}
\label{inclusion1}
    \left\vert \left\lbrace t\in \mathcal{T}:\frac{1}{J}\sum_{n=0}^{J-1}(u(t,n)-\bar{u}(t))^2\leq2K^2\right\rbrace\right\vert\geq \frac{T}{2},
\end{equation}
where $|\cdot|$ denotes cardinality and $\bar u(t)$, defined in \eqref{ubar}, is the spatial average of $u(t,\cdot)$ at time $t$. Next observe that if
\begin{equation*}
    \frac{1}{J}\sum_{n=0}^{J-1}(u(t,n)-\bar{u}(t))^2\leq2K^2,
\end{equation*}
then a simple counting argument shows that
\begin{equation}
    \label{inclusion2}
    \left\vert \left\lbrace n\in \mathcal{J}:u(t,n)\in[\bar{u}(t)-2K,\bar{u}(t)+2K]\right\rbrace\right\vert\geq \frac{J}{2}.
\end{equation}
Set $d_t^{\pm}=\bar{u}(t)\pm 2K$, so that $d_t^+-d_t^-=4K$. There exist integers $z^+,z^-\in\mathbb{Z}$ and a parameter $\gamma\in[0,1]$ such that
\begin{equation*}
    d_t^+=z^+d-\gamma d, \quad d_t^-\in(z^-d-\gamma d,z^-d+(1-\gamma)d].
\end{equation*}
To measure the local density of the profile $u(t,\cdot)$ at time $t$, we define the number of spatial sites that fall within a given interval of length $d$ as
\begin{equation}
\label{localT}
    l_{d,t}(z)=\sum_{i=0}^{J-1}1_{u(t,i)\in(zd-\gamma d,zd+(1-\gamma)d]}.
\end{equation}

Combining \eqref{inclusion1} and \eqref{inclusion2}, we see that on the event $A_{K,T}$,
\begin{equation}
\label{localbound}
    \left\vert \left\lbrace t\in \mathcal{T}:\sum_{z=z^-}^{z^+-1}l_{d,t}(z)\geq \frac{J}{2}\right\rbrace\right\vert\geq \frac{T}{2}.
\end{equation}
Note that even if two values $u(t,i)$ and $u(t,j)$ satisfy $\vert u(t,i)-u(t,j)\vert<d$, they need not belong to the same interval in the partition \eqref{localT}. Nevertheless, the following lower bound remains valid:
\begin{equation}
\label{localTineq}
    N_d(t)\geq\sum_{z\in\Z}l^2_{d,t}(z)-J,
\end{equation}
where $N_d(t)$ is the approximate self-intersection count defined in \eqref{localTime}.

Applying the Cauchy-Schwarz inequality to the sum over $z$, and then using \eqref{localbound} and \eqref{localTineq}, yields on $A_{K,T}$
\begin{equation}
\begin{split}
\label{QmeasB}
\sum_{t=1}^{T}N_d(t)&\geq\sum_{t=1}^{T}\sum_{z\in\Z}l^2_{d,t}(z)-TJ\geq\sum_{t=1}^{T}\sum_{z=z^-}^{z^+-1}l^2_{d,t}(z)-TJ\\
    &\geq \frac{1}{z^+-z^-}\sum_{t=1}^{T}\left(\sum_{z=z^-}^{z^+-1}l_{d,t}(z)\right)^2-TJ\\
    &\geq \frac{d}{4K+d}\cdot\frac{T}{2}\cdot\left(\frac{J}{2}\right)^2-TJ = TJ\left(\frac{d J}{32K+8d}-1\right).
    \end{split}
\end{equation}
At this point it is useful to note that $K$ can be at most a constant multiple of $dJ$. Indeed, otherwise one could take $u(t,i)=id$, in which case the radius of gyration is of order $dJ<K$, while $N_d(t)=0$ for every $t$. In the continuous model, the diagonal set has zero measure, whereas in the discrete model the diagonal consists of exactly $J$ points and therefore contributes nontrivially to the counting measure. To obtain a lower bound for $K$ of at least order $J$, one must therefore take $d>1$ sufficiently large so that the diagonal contribution does not overwhelm the off-diagonal self-interaction terms.

We now estimate the logarithmic asymptotic probability under $Q_T$. Using Lemma \ref{ZT}, together with \eqref{QmeasB} and the definition of the penalizing factor $\mathcal{E}_T$ in \eqref{Qmeas}, we find
\begin{equation}
\label{smalldistance}
\begin{split}
    &\limsup_{T\to \infty}\frac{1}{T}\log Q_T\left[R(T,J)\leq K\right]\\
    &\leq\limsup_{T\to \infty}\frac{1}{T}\log \E^{P_T}\left[\mathcal{E}_T1_{R(T,J)\leq K}\right]-\liminf_{T\to\infty}\frac{1}{T}\log Z_T\\
    &\leq\begin{cases}
            -\frac{\beta J}{d^{\alpha}}\left(\frac{d J}{32K+8d}-1\right)+C\beta^{\frac{2}{3}}d^{\frac{4-2\alpha}{3}}J^{\frac{1}{3}}(\log J)^{\frac{2}{3}} & \beta\geq C'd^{1+\alpha}J^{-2}(\log J)^{-1},\\
            -\frac{\beta J}{d^{\alpha}}\left(\frac{d J}{32K+8d}-1\right)+Cd^2J^{-1}&\beta<C'd^{1+\alpha}J^{-2}(\log J)^{-1}.\end{cases}
    \end{split}
\end{equation}
If $\beta<C'd^{1+\alpha}J^{-2}(\log J)^{-1}$, then $\frac{\beta J}{d^\alpha}<C'd J^{-1}(\log J)^{-1}<Cd^2J^{-1}$, so for large $J$ the upper bound of \eqref{smalldistance} cannot be negative. By contrast, if $\beta \geq\beta_Cd^{4+\alpha}J^{-2}(\log J)^{2}$ and we choose $K=K_1\beta_C^{\frac{1}{3}}d J$, then \eqref{smalldistance} becomes
\begin{equation*}
    \limsup_{T\to \infty}\frac{1}{T}\log Q_T\left[R(T,J)\leq K_1\beta_C^{\frac{1}{3}}d J\right]\leq \left(-\frac{\beta_C^{1/3}d J}{32K_1\beta_C^{\frac{1}{3}}d J+8d}+C'\right)\beta^{\frac{2}{3}}d^{\frac{4-2\alpha}{3}}J^{\frac{1}{3}}(\log J)^{\frac{2}{3}}.
\end{equation*}
Choosing $K_1$ sufficiently small makes the coefficient in parentheses strictly negative. Hence the exponential decay rate is negative, and therefore
\begin{equation*}
    \lim_{T\to \infty}Q_T\left[R(T,J)\leq K_1\beta_C^{\frac{1}{3}}d J\right]=0,
\end{equation*}
which completes the proof.
\end{proof}

\section{Large distance tail estimate}
\label{Large distance tail estimate}
In this section, we establish an asymptotic upper bound for the tail probability of the quantity $R(T,J)$ defined in \eqref{RTJ}. The result is formalized in the following proposition.
\begin{prop}
\label{upper}
    There exist positive constants $\tilde{d}, \tilde{J}, \beta_C$ and $K_2$, such that for $J>\tilde{J}$, $\beta \geq\beta_Cd^{4+\alpha}J^{-2}(\log J)^{2}$ and $\frac{\tilde{d}J}{\log J}\geq d\geq 1$, we have
\begin{equation*}
    \lim_{T\to\infty}Q_T\left[R(T,J)\geq K_2\beta^{\frac{1}{3}}d^{\frac{2-\alpha}{3}}J^{\frac{5}{3}}(\log J)^{-\frac{1}{6}}\right] = 0.
\end{equation*}
\end{prop}

To prove this result, we begin by analyzing the structure of $\bar{u}(t)$ in \eqref{ubar}. Writing
\begin{equation*}
    \begin{split}
        \bar{u}(t) &= \frac{1}{J}\sum_{n=0}^{J-1}u(t,n)=\frac{1}{J}\sum_{n=0}^{J-1}\sum_{k=0}^{J-1}G_t(n,k)u_0(k) + \frac{1}{J}\sum_{n=0}^{J-1}\sum_{s=0}^{t-1}\sum_{k=0}^{J-1}G_{t-s}(n,k)\xi(s,k)=:\bar{S}_1 + \bar{S}_2,
    \end{split}
\end{equation*}
we isolate the deterministic and stochastic contributions. Using trigonometric identities together with the orthogonality of the eigenfunctions, one checks that
\begin{equation*}
    \sum_{n=0}^{J-1}\phi_m(n)=0
\end{equation*}
for every $m\ge 1$. Hence only the mode $m=0$ contributes to the spatial average. It follows that
\begin{equation*}
        \bar{S}_1=\frac{1}{J}\sum_{n=0}^{J-1}\sum_{k=0}^{J-1}u_0(k)=\sum_{k=0}^{J-1}u_0(k) \text{ and }
        \bar{S}_2=\frac{1}{J}\sum_{n=0}^{J-1}\sum_{s=0}^{t-1}\sum_{k=0}^{J-1}\xi(s,k)=\sum_{s=0}^{t-1}\sum_{k=0}^{J-1}\xi(s,k).
\end{equation*}
Substituting these expressions into the full solution \eqref{sol}, we obtain the fluctuation around the mean:
\begin{equation*}
    u(t,n)-\bar{u}(t) = \sum_{k=0}^{J-1}\sum_{m=1}^{J-1}\rho_m^t\phi_m(n)\phi_m(k)u_0(k) + \sum_{s=0}^{t-1}\sum_{k=0}^{J-1}\sum_{m=1}^{J-1}\rho_m^{t-s}\phi_m(n)\phi_m(k)\xi(s,k).
\end{equation*}
Since $|\rho_m|<1$ for all $m\ge 1$, the deterministic term involving $u_0$ converges to zero uniformly in $n$ as $t\to\infty$. Thus the long-time behavior of $R(T,J)$ is determined entirely by the stochastic part. For this reason, we may assume without loss of generality that $u_0\equiv 0$, in which case
\begin{equation*}
\begin{split}
    u(t,n)-\bar{u}(t)&=\sum_{s=0}^{t-1}\sum_{k=0}^{J-1}\sum_{m=1}^{J-1}\rho_m^{t-s}\phi_m(n)\phi_m(k)\xi(s,k)=\sum_{m=1}^{J-1}X_t^{(m)}\phi_m(n),
    \end{split}
\end{equation*}
where
\begin{equation*}
    X_t^{(m)}:=\sum_{s=0}^{t-1}\rho_m^{t-s}\sum_{k=0}^{J-1}\phi_m(k)\xi(s,k)
\end{equation*}
satisfies the recursion
\begin{equation*}
    X_{t+1}^{(m)} = \rho_mX_t^{(m)} + \tilde{\xi}_t^{(m)},\quad X_0^{(m)} = 0,
\end{equation*}
with $\tilde{\xi}_t^{(m)}:=\rho_m\sum\limits_{k=0}^{J-1}\phi_m(k)\xi(t,k)$ an i.i.d Gaussian random variable of variance $\sigma_m^2$. Therefore, for each $1\le m\le J-1$, the process $X_t^{(m)}$ is an autoregressive process of order one (AR(1)). Using the orthonormality of $\{\phi_m(n)\}_{m\in\mathcal{J},\,n\in\mathcal{J}}$, we arrive at
\begin{equation}
\label{st}
    R(T,J)^2=\frac{1}{TJ}\sum_{t=1}^T\sum_{n=0}^{J-1}(u(t,n)-\bar{u}(t))^2 = \frac{1}{J}\sum_{m=1}^{J-1}\frac{1}{T}\sum_{t=1}^T\left(X_t^{(m)}\right)^2.
\end{equation}

We now invoke the large deviation principle from \cite{bryc1993large} for the quadratic functional $S_T^{(m)}:=\frac{1}{T}\sum\limits_{t=1}^T\left(X_t^{(m)}\right)^2$ of the autoregressive process.
\begin{lemma}[\cite{bryc1993large}]
\label{ldp}
    If $\vert\rho_m\vert<1$, then $\left\lbrace S^{(m)}_T\right\rbrace_{m\in\mathcal{J}}$ satisfies a large deviation principle with good rate function
    \begin{equation*}
        I_m(x)=\begin{cases}
           -\frac{1}{2}\ln\left(\frac{2x}{\sigma_m^2+\sqrt{4\rho_m^2x^2+\sigma_m^4}}\right)+\frac{1}{2\sigma_m^2}\left[(\rho_m^2+1)x-\sqrt{4\rho_m^2x^2+\sigma_m^4}\right]&\text{for } x>0,\\
            \infty&\text{for } x\leq 0.
        \end{cases}
    \end{equation*}
\end{lemma}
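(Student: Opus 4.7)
The plan is to apply the G\"artner--Ellis theorem to the non-negative random variable $S_T^{(m)}$, exploiting the fact that $TS_T^{(m)}=\sum_{t=1}^T(X_t^{(m)})^2$ is a quadratic form in the Gaussian vector $(X_1^{(m)},\dots,X_T^{(m)})$. Let $\Sigma_T$ be its covariance matrix and $A_T:=\Sigma_T^{-1}$ the corresponding precision matrix. The standard Gaussian moment generating function formula gives
\begin{equation*}
\Lambda_T(\theta):=\frac{1}{T}\log\E\!\left[e^{\theta T S_T^{(m)}}\right]=\frac{1}{2T}\big[\log\det A_T-\log\det(A_T-2\theta I)\big],
\end{equation*}
valid for $\theta$ below the reciprocal of twice the top eigenvalue of $\Sigma_T$. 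The AR(1) recursion $X_{t+1}^{(m)}=\rho_m X_t^{(m)}+\tilde\xi_t^{(m)}$ makes $A_T$ tridiagonal, with bulk diagonal $(1+\rho_m^2)/\sigma_m^2$ and off-diagonal $-\rho_m/\sigma_m^2$, together with $O(1)$ corrections at the two corners coming from the initial condition $X_0^{(m)}=0$ and from the absence of $X_{T+1}^{(m)}$.

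Next I would compute $\Lambda(\theta):=\lim_T\Lambda_T(\theta)$ via the determinant of this tridiagonal matrix. The three-term recursion for such determinants has a $2\times 2$ transfer matrix whose eigenvalues are $\lambda_\pm=\tfrac{1}{2}(\alpha\pm\sqrt{\alpha^2-4\gamma})$ with $\alpha=(1+\rho_m^2)/\sigma_m^2-2\theta$ and $\gamma=\rho_m^2/\sigma_m^4$, so
\begin{equation*}
\frac{1}{T}\log\det(A_T-2\theta I)\longrightarrow \log\lambda_+(\theta),
\end{equation*}
the boundary corrections contributing only multiplicative $O(1)$ factors that disappear after dividing by $T$. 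Equivalently this is Szeg\H{o}'s first limit theorem applied to the Toeplitz symbol $f_\theta(\lambda)=\alpha-2(\rho_m/\sigma_m^2)\cos\lambda$. Subtracting the $\theta=0$ analogue yields an explicit elementary expression for $\Lambda(\theta)$ on the natural half-line $\theta<\theta^\star$ where the discriminant remains positive.

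With $\Lambda$ in hand, I would verify that it is smooth on the interior of its effective domain and steep at $\theta^\star$, so that G\"artner--Ellis produces an LDP with good rate function $I(x)=\sup_\theta(\theta x-\Lambda(\theta))$. The first-order condition $\Lambda'(\theta)=x$ rationalises into a single equation in $\sqrt{\alpha^2-4\gamma}$, which admits a closed-form solution for $\theta(x)$; substituting back produces the stated formula, with the expression $\sqrt{4\rho_m^2 x^2+1}$ arising directly from this discriminant. Since $S_T^{(m)}\ge 0$ almost surely, $I(x)=\infty$ for $x\le 0$ is automatic.

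The main obstacle is the honest treatment of the boundary terms in $A_T$. Although they perturb only two entries of a $T\times T$ matrix, one must check that they alter $\det(A_T-2\theta I)$ by no more than a factor $e^{o(T)}$, so that the exponential rate is unaffected. The cleanest route is the transfer-matrix description itself: the top Lyapunov exponent of the iteration is $\log\lambda_+(\theta)$, independent of the initial and terminal data supplied to the recursion. The remaining Legendre computation and the regularity verification of $\Lambda$ are then routine but algebraically laborious, and reproduce the formula recorded in \cite{bryc1993large}.
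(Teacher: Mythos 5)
Your proposal is correct in outline and reaches the same scaled cumulant generating function, but via a genuinely different computational route than the one the paper (and \cite{bryc1993large}) uses. The paper's proof exploits the Markov structure probabilistically: conditioning on $X_t$ and iterating the one-step Gaussian identity
\(
\E[\exp(\lambda X_{t+1}^2)\mid X_t]=(1-2\lambda\sigma_m^2)^{-1/2}\exp\bigl(\tfrac{\lambda\rho_m^2 X_t^2}{1-2\lambda\sigma_m^2}\bigr)
\)
expresses $\E[\exp(yS_T)]$ as $\prod_{k=0}^{T-1}(1-2\sigma_m^2\lambda_k)^{-1/2}$ where $\lambda_{k+1}=\tfrac{\rho_m^2\lambda_k}{1-2\lambda_k\sigma_m^2}+y$ is a M\"obius iteration, and the limit CGF is governed by the attracting fixed point of that map. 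You instead treat $TS_T^{(m)}$ as a Gaussian quadratic form, write the scaled log-MGF through the tridiagonal precision matrix $A_T$, and extract the limit from the growth rate of the three-term continuant recursion (equivalently, Szeg\H{o}'s theorem or the top Lyapunov exponent of the transfer matrix), then close with G\"artner--Ellis. The two routes are algebraically intertwined — the continuant recursion for $\det(A_T-2\theta I)$ is exactly the linearisation of the M\"obius recursion for $\lambda_k$ — but they are presented differently and each has merits: the paper's conditioning argument is short and stays internal to the Markov chain (which is why it can defer the fixed-point and Legendre analysis to \cite{bryc1993large}), while your determinant/Toeplitz framing is more mechanical and generalises directly to any stationary Gaussian sequence rather than just AR(1). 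Two small points to watch in your version: with the initial condition $X_0^{(m)}=0$ used in the paper, the $O(1)$ correction in $A_T$ in fact occurs at only one corner (the $t=T$ diagonal entry is $1/\sigma_m^2$ rather than $(1+\rho_m^2)/\sigma_m^2$, while the $t=1$ entry is unchanged); and the steepness of $\Lambda$ at $\theta^\star=\tfrac{(1-|\rho_m|)^2}{2\sigma_m^2}$, which you correctly flag as needing verification, does hold here because $\lambda_+'(\theta)\to-\infty$ as the discriminant $\alpha^2-4\gamma\to 0$ while $\lambda_+(\theta^\star)=|\rho_m|/\sigma_m^2>0$, so G\"artner--Ellis yields the full LDP and not just the upper bound.
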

\begin{proof}
For fixed $m$, write $X_t:=X_t^{(m)}$ and $S_T:=S_T^{(m)}$ to simplify notation. Integrating the Gaussian density shows that for every $\lambda<\frac{1}{2\sigma_m^2}$,
    \begin{equation*}
    \E\left[\exp\left(\lambda X^2_{t+1}\right)\vert X_{t}\right] = \frac{1}{\sqrt{1-2\lambda\sigma_m^2}}\exp\left(\frac{\lambda\rho_m^2X_t^2}{1-2\lambda\sigma_m^2}\right).
    \end{equation*}
    Iterating conditional expectations and using $X_0=0$, one obtains the moment-generating function of $S_T$
    \begin{equation*}
        \E[\exp(yTS_T)]=\prod_{k=0}^{T-1}(1-2\sigma_m^2\lambda_k)^{-1/2},
    \end{equation*}
    where the sequence $\{\lambda_k\}_{k\ge 0}$ is defined recursively by
    \begin{equation*}
        \lambda_{k+1}=\frac{\rho_m^2\lambda_k}{1-2\lambda_k\sigma_m^2} + y,
    \end{equation*}
    with an initial condition $\lambda_0=y<\frac{1}{2\sigma_m^2}$. The critical threshold is $\delta=\frac{(1-\vert \rho_m\vert)^2}{2\sigma_m^2}$. For $y<\delta$, the associated M\"obius map, represented by the matrix $M = \begin{bmatrix}
    \rho_m^2-2\sigma_m^2y & y\\
    -2\sigma_m^2 & 1\\
    \end{bmatrix}
    $
    admits a limiting function
    \[g(y)=\frac{1-\rho_m^2+2\sigma_m^2y-\sqrt{(1-\rho_m^2+2\sigma_m^2y)^2-8\sigma_m^2y}}{4\sigma_m^2} \text{ and } L(y)=-1/2\ln(1-2\sigma_m^2g(y)).\]
    Finally, introducing the variable $z=\sqrt{(1-\rho_m^2+2\sigma_m^2y)^2-8\sigma_m^2y}$ solving the equation $\frac{\partial L(y)}{\partial y}=x$, one finds that $z=\sigma_m^2/x$. Substituting this into the Legendre transform $I_m(x)=xy(x)-L(y(x))$ gives the stated closed form for $I_m(x)$.
\end{proof}
\begin{remark}
\label{ldpcond}
	The rate function $I_m(x)$ vanishes at $x = \frac{\sigma_m^2}{1-\rho_m^2}$, which is the ergodic mean of $S_T^{(m)}$ by the ergodic theorem, and is strictly increasing for $x > \frac{\sigma_m^2}{1-\rho_m^2}$. This is precisely the regime relevant for upper deviations.
\end{remark}

Since $\sigma_m^2$, the variance of $\tilde{\xi}_t^{(m)}$, is equal to $\rho_m^2$, we may set $x=\frac{\rho_m^2k}{1-\rho_m^2}$. Substituting this into the rate function yields
\begin{align*}
    I_{m}(x(k))&=-\frac{1}{2}\ln\left(\frac{\frac{2\rho_m^2k}{1-\rho_m^2}}{\rho^2_m+\sqrt{\frac{4\rho_m^6k^2}{(1-\rho_m^2)^2}+\rho_m^4}}\right)+\frac{1}{2\rho_m^2}\left[\frac{(1+\rho_m^2)\rho_m^2k}{1-\rho_m^2}-\sqrt{\frac{4\rho_m^6k^2}{(1-\rho_m^2)^2}+\rho_m^4}\right]\\
    &=-\frac{1}{2}\ln\left(\frac{2}{1-\rho_m^2+\sqrt{(1-\rho_m^2)^2+4\rho_m^2k^2}}\right)+\frac{(k^2-1)(1-\rho_m^2)}{2((\rho_m^2+1)k+\sqrt{(1-\rho_m^2)^2+4\rho_m^2k^2})}\\
    &>\frac{(k-1)^2(1-\rho_m)}{2k(1+\rho_m)}>\frac{k}{4J^2}
\end{align*}
for sufficiently large $k$. Consequently, Lemma \ref{ldp} gives
\begin{equation}
\label{P_rate_bound}
    \limsup_{T\to\infty}\frac{1}{T}\log P\left(S^{(m)}_T>\frac{\rho_m^2k}{1-\rho_m^2}\right)\leq -I_m\left(\frac{\rho_m^2k}{1-\rho_m^2}\right)<\frac{-k}{4J^2}.
\end{equation}
We now sum over $m$ to control the tail of $R(T,J)$. Define the normalizing constant
\begin{equation}
\label{c0}
    c_0(J)=\left(\sum_{m=1}^{J-1}\frac{\rho_m^2}{{1-\rho_m^2}}\right)^{-1}= \left[\sum_{m=1}^{J-1}\frac{\cos^2\left(\frac{m\pi}{J}\right)}{1-\cos^2\left(\frac{m\pi}{J}\right)}\right]^{-1}>\left[2\sum_{m=1}^{\lfloor (J-1)/2\rfloor}\frac{J^2}{m^2\pi^2}\right]^{-1}>\frac{3}{J^2}.
\end{equation}
Therefore, whenever $K^2Jc_0(J)$ is sufficiently large, \eqref{st}, \eqref{P_rate_bound}, and \eqref{c0} imply
\begin{equation*}
\begin{split}
\label{tailP}
        &\limsup_{T\to\infty}P[R(T,J)\geq K] \\
        &= \limsup_{T\to\infty}P\left[R(T,J)^2\geq K^2\right]=\limsup_{T\to\infty}P\left(\sum_{m=1}^{J-1}S_T^{(m)}\geq JK^2\right)\\
    &\leq \limsup_{T\to\infty}\sum_{m=1}^{J-1}P\left(S_T^{(m)}\geq\frac{\rho_m^2Jc_0(J)K^2}{1-\rho_m^2}\right)\leq\limsup_{T\to\infty}\sum_{m=1}^{J-1}\exp\left(-\frac{TI_m\left(\frac{\rho_m^2Jc_0(J)K^2}{1-\rho_m^2}\right)}{2}\right)\\
    &\leq \limsup_{T\to\infty}(J-1)\exp\left(-\frac{3TK^2}{4J^{3}}\right).
\end{split}
\end{equation*}

Now choose $K = K_2\beta^{\frac{1}{3}}d^{\frac{2-\alpha}{3}}J^{\frac{5}{3}}(\log J)^{-\frac{1}{6}}$. Then, for $\beta \geq\beta_Cd^{4+\alpha}J^{-2}(\log J)^{2}$,
\begin{equation*}
\begin{split}
    \limsup_{T\to \infty}\frac{1}{T}\log Q_T\left[R(T,J)\geq K\right]&\leq\limsup_{T\to \infty}\frac{1}{T}\log \E^{P_T}\left[1_{R(T,J)\geq K}\right]-\liminf_{T\to\infty}\frac{1}{T}\log Z_T\\
    &\leq -\frac{CK^2\log J}{J^3}+C'\beta^{\frac{2}{3}}d^{\frac{4-2\alpha}{3}}J^{\frac{1}{3}}(\log J)^{\frac{2}{3}}\\
    &\leq (-CK_2^2+C')\beta^{\frac{2}{3}}d^{\frac{4-2\alpha}{3}}J^{\frac{1}{3}}(\log J)^{\frac{2}{3}}.
    \end{split}
\end{equation*}
Choosing $K_2$ sufficiently large makes the coefficient strictly negative. Hence the exponential decay rate is negative, and therefore
\begin{equation*}
\lim_{T\to\infty}Q_T\left[R(T,J)\geq K_2\beta^{\frac{1}{3}}d^{\frac{2-\alpha}{3}}J^{\frac{5}{3}}(\log J)^{-\frac{1}{6}}\right] = 0,
\end{equation*}
which completes the proof. 

\section{The limiting renormalized regime}
\label{limitalpha}
We impose $d^{4+\alpha}J^{-2}(\log J)^2=1$ to make $\beta$ independent of both $J$ and $d$, that is $d=\left(\frac{J}{\log J}\right)^{\frac{2}{4+\alpha}}$. Under this choice, define the event 
\begin{equation*}
    \mathcal{G}(\alpha,T):=\left\lbrace K_1\beta_C^{\frac{1}{3}}J^{\frac{6+\alpha}{4+\alpha}}(\log J)^{-\frac{2}{4+\alpha}}\leq R(T,J)\leq K_2\beta^{\frac{1}{3}}J^{\frac{8+\alpha}{4+\alpha}}(\log J)^{\frac{\alpha-4}{2\alpha+8}}\right\rbrace.
\end{equation*} 
By Propositions \ref{lower} and \ref{upper}, for $\beta\ge \beta_C$ and all sufficiently large $J$,
\begin{equation*}
    \lim_{T\to\infty}Q_T[\mathcal{G}(\alpha,T)]=1.
\end{equation*}
The next lemma shows that the limits in $\alpha$ and $T$ may be interchanged, even though both the measure $Q_{T}=Q_{\alpha,T}$ and event $\mathcal{G}(\alpha, T)$ depend on $\alpha$. The limiting measure is characterized by the penalizing factor in \eqref{limit_penal}, thereby completing the proof of Theorem \ref{mainTh}.

\begin{lemma}
\label{interchange}
    There exist positive constants $\beta_C$ and $\tilde{J}$ such that for $\beta\geq\beta_C$ and $J>\tilde{J}$, we have
    \begin{equation*}
    \lim_{T\to\infty}\lim_{\alpha\to\infty}Q_{\alpha,T}[\mathcal{G}(\alpha,T)]=\lim_{\alpha\to\infty}\lim_{T\to\infty}Q_{\alpha,T}[\mathcal{G}(\alpha,T))] = 1,
    \end{equation*}
    where 
    \begin{equation*}
        \lim_{\alpha\to\infty}Q_{\alpha,T}[\mathcal{G}(\alpha,T)]=\widetilde{Q}_T\left[K_1\beta_C^{\frac{1}{3}}J\leq R(T,J)\leq K_2\beta^{\frac{1}{3}}J(\log J)^{\frac{1}{2}}\right]
    \end{equation*}
    with penalizing factor
    \begin{equation*}
    \widetilde{\mathcal{E}}_T=\exp\left(-\frac{\beta(\log J)^2}{J^2}\sum_{t=1}^TN_1(t)\right).
    \end{equation*}
\end{lemma}
\begin{proof}
We first show that the convergence in $T$ is uniform in $\alpha$. By Propositions \ref{lower} and \ref{upper}, there exists $T_0$ such that for all $T\ge T_0$,
\begin{equation*}
\begin{split}
    1-Q_{\alpha,T}{[\mathcal{G}(\alpha,T)]}&\leq \exp\left(-C(K_1)T\beta^{\frac{2}{3}}d^{\frac{4-2\alpha}{3}}J^{\frac{1}{3}}(\log J)^{\frac{2}{3}}\right)+\exp\left(-C'(K_2)T\beta^{\frac{2}{3}}d^{\frac{4-2\alpha}{3}}J^{\frac{1}{3}}(\log J)^{\frac{2}{3}}\right)\\
    &\leq C\exp\left(-C'(K_1,K_2)T\beta_C^{\frac{2}{3}}J^{\frac{4-\alpha}{4+\alpha}}(\log J)^{\frac{2\alpha}{4+\alpha}}\right),
\end{split}
\end{equation*}
which implies
\begin{equation*}
    \lim_{T\to\infty}\sup_{\alpha\geq 0}\vert 1-Q_{\alpha,T}{[\mathcal{G}(\alpha,T)]}\vert\leq\lim_{T\to\infty}C\exp\left(-C'(K_1,K_2,\beta_C)TJ^{-1}(\log J)^{2}\right)=0.
\end{equation*}
Next set $\mathcal{G}(\infty,T):=\left\lbrace K_1\beta_C^{\frac{1}{3}}J\leq R(T,J)\leq K_2\beta^{\frac{1}{3}}J(\log J)^{\frac{1}{2}}\right\rbrace$ and define the limiting measure $Q_{\infty,T}$ by the penalizing factor 
\begin{equation*}
    \mathcal{E}_{T,\infty}=\exp\left(-\frac{\beta(\log J)^2}{J^2}\sum_{t=1}^TN_1(t)\right),
\end{equation*}
with partition function $Z_{T,\infty}=\mathbb{E}^{P_{T,J}}[\mathcal{E}_{T,\infty}]$. 

For every fixed polymer configuration $u(t,n)$, $\lim_{\alpha\to\infty}N_{d(\alpha,J)}(t)=N_1(t)$ since $d(\alpha,J)\to 1$ and the map $d \mapsto N_d(t)$ is right-continuous with left limits, and $P_T$-almost surely no pair satisfies $|u(t,i)-u(t,j)| = 1$ exactly (as $u(t,i)-u(t,j)$ has a continuous Gaussian distribution under $P_T$). Therefore $\mathcal{E}_{T,\alpha}$ converges to $\mathcal{E}_{T,\infty}$ $P_T$-almost surely. Since $0\le \mathcal{E}_{T,\alpha}\le 1$, the bounded convergence theorem gives
\begin{equation*}
    \lim_{\alpha\to\infty}\Vert \mathcal{E}_{T,\alpha}-\mathcal{E}_{T,\infty}\Vert_{L^1(P_T)} =0 .
\end{equation*}

We now compare $Q_{\alpha,T}$ and $Q_{\infty,T}$ in total variation. Recall that
\begin{equation*}
    \Vert Q_{\alpha,T}-Q_{\infty,T}\Vert_{TV} = \sup_{A}\vert Q_{\alpha,T}(A)-Q_{\infty,T}(A)\vert,
\end{equation*}
where the supremum is taken over all measurable sets $A$. Since both measures are absolutely continuous with respect to $P_T$, with Radon-Nikodym derivatives $\frac{d Q_{T,\alpha}}{dP_T} = \frac{\mathcal{E}_{T,\alpha}}{Z_{T,\alpha}}$ and $\frac{dQ_{T,\infty}}{dP_T} = \frac{\mathcal{E}_{T,\infty}}{Z_{T,\infty}}$. Taking $L^1(P_T)$ norms and applying the triangle inequality yield
\begin{equation*}
\begin{split}
    \lim_{\alpha\to\infty}\Vert Q_{\alpha,T}-Q_{\infty,T}\Vert_{TV}& =\lim_{\alpha\to\infty}\left\Vert \frac{\mathcal{E}_{T,\alpha}}{Z_{T,\alpha}}-\frac{\mathcal{E}_{T,\infty}}{Z_{T,\infty}}\right\Vert_{L^1(P_T)}\\
    &\leq \lim_{\alpha\to\infty}\frac{1}{Z_{T,\alpha}}\Vert \mathcal{E}_{T,\alpha}-\mathcal{E}_{T,\infty}\Vert_{L^1(P_T)}+\frac{\Vert \mathcal{E}_{T,\alpha}-\mathcal{E}_{T,\infty}\Vert_{L^1(P_T)}}{Z_{T,\alpha}Z_{T,\infty}}\Vert\mathcal{E}_{T,\infty}\Vert_{L^1(P_T)}=0,
    \end{split}
\end{equation*}
which implies $\lim_{\alpha\to\infty}Q_{\alpha,T}(\mathcal{G}(\cdot,T)) = Q_{\infty,T}(\mathcal{G}(\cdot,T))$ since $\mathcal{G}(\cdot,T)$ is measurable for each $T$.

It remains to identify the limit of the events $\mathcal{G}(\alpha,T)$. For each fixed realization of $R(T,J)$, the indicator $1_{\mathcal{G}(\alpha,T)}$ fails to converge to $1_{\mathcal{G}(\infty,T)}$ only if $R(T,J)$ is exactly at a boundary point. Since $R(T,J)$ is continuous and $Q_{\infty,T}$ is absolutely continuous with respect to $P_T$, $1_{\mathcal{G}(\alpha,T)}$ converges to $1_{\mathcal{G}(\infty,T)}$ $Q_{\infty,T}$-almost surely. Then, under the fixed measure $Q_{\infty,T}$, the bounded convergence theorem yields
\begin{equation*}
    \lim_{\alpha\to\infty}Q_{\infty,T}[\mathcal{G}(\alpha,T)]=Q_{\infty,T}[\mathcal{G}(\infty,T)]. 
\end{equation*}
Combining the convergence of the measures with the convergence of the events, we conclude that for each fixed $T$,
\[
\lim_{\alpha\to\infty}Q_{\alpha,T}[\mathcal{G}(\alpha,T)]
=
Q_{\infty,T}[\mathcal{G}(\infty,T)].
\]
Together with the uniform convergence in $T$ established at the beginning of the proof, this justifies the interchange of limits and completes the proof.
\end{proof}

\subsection*{Data availability} This is a theoretical paper with no associated data.

\section*{Declaration}
\subsection*{Conflict of interest} No funding was received for conducting this study.

\bibliographystyle{alpha}
\bibliography{polymer}

\end{document}